\newtheorem{thm}{Theorem}[section]
\newtheorem{prop}[thm]{Proposition}
\newtheorem{cor}[thm]{Corollary}
\newtheorem{lem}[thm]{Lemma}
\newtheorem{fact}[thm]{Fact}
\numberwithin{equation}{section}
\def\Ini{\mathrm{Ini}}
\def\T{\mathrm{T}}
\def\Int{\mathrm{Int}}
\def\Pa{{\mathrm P}}
\def\S{{\mathcal S}}
\def\PS{{\mathcal PS}}
\def\sig{{\sigma}}
\def\a{\mathbf{a}}
\def\b{\mathbf{b}}
\def\c{\mathbf{c}}
\def\x{\mathbf{x}}
\def\y{\mathbf{y}}
\def\u{\mathbf{u}}
\def\pc{\textrm{pc}}
\def\ep{\mathbf{\epsilon}}
\def\P{{\mathcal P}}
\def\C{{\mathcal C}}
\def\1{{\bf 1}}
\def\0{{\bf 0}}
\begin{document}

\title{The most frequent peak set of a random permutation}

\author[A. Kasraoui]{Anisse Kasraoui$^{*}$}
\address{Fakult\"at f\"ur Mathematik, Universit\"at Wien,
Nordbergstrasse 15,
A-1090 Vienna,
Austria}
\email{anisse.kasraoui@univie.ac.at}
\thanks{$^{*}$ Research supported by the grant S9607-N13 from Austrian Science Foundation FWF
 in the framework of the National Research Network  ``Analytic Combinatorics and Probabilistic Number theory".}
 

\begin{abstract}
Given a subset $S\subseteq\mathbb{P}$, let $\Pa(S;n)$ be the number of permutations in the symmetric group of $\{1,2,\ldots,n\}$
that have peak set $S$. We prove a recent conjecture due to Billey, Burdzy and Sagan, which determines 
the sets that maximize $\Pa(S;n)$, where $S$ ranges over all subsets of $\{1,2,\ldots,n\}$.
\end{abstract}
\maketitle

\section{Introduction}

Given a permutation $\sig=\sig_1\,\sig_2\,\cdots \,\sig_n$ in
$\S_n$, the symmetric group of the set $[n]=\{1,2,\ldots,n\}$, the
up-down sequence of $\sig$ is the word
$F(\sig)=z_1\,z_2\,\cdots\,z_{n-1}$ with letters in $\{+,-\}^{n-1}$
defined by $z_i=-$ if $i$ is a descent of $\sig$ (i.e.,
$\sig_i>\sig_{i+1}$) and $z_i=+$  if $i$ is an ascent of $\sig$
(i.e., $\sig_i<\sig_{i+1}$).  The enumeration of permutations with a
given up-down sequence is a traditional topic of classical
combinatorics which has received considerable interest since the end
of the 19th century (see
e.g.~\cite{Mac,Bruijn,Carlitz,Foulkes,Niven,Viennot}). A classical
result in this topic that has been proved several times during the
last century
 (see e.g.~\cite{Bruijn,Niven,Viennot} but the list is not exhaustive)
 is that the maximum of $\beta(w)$, the number of permutations of $[n]$ with up-down sequence $w$,
where $w$ ranges over all up-down sequences of length $n-1$, occurs
when $w$ is an alternating sequence, i.e. $w=+-+-+-\cdots$ or
$w=-+-+-+\cdots$. A refinement of this maximization problem which
consists in maximizing $\beta(w)$ where $w$ ranges over all up-down
sequences of length $n$ with a fixed number of alternating runs, has
been conjectured by Gessel and was solved by Ehrenborg and
Mahajan~\cite{Ehrenborg}.

A related maximization problem has recently arisen  in the work on the peak statistic by
Billey, Burdzy and Sagan~\cite{Billey}. Recall that, given a
permutation $\sig=\sig_1\,\sig_2\,\cdots \,\sig_n$ in~$\S_n$, an
index~$i$ is called a peak of~$\sigma$ if $\sig_{i-1} < \sig_{i}
> \sig_{i+1}$. The peak set is defined to be
\begin{align*}
\PS(\sig)&=\{i\,:\, \text{$i$ is a peak of $\sig$}\},
\end{align*}
and given a set $S$ of integers, we set
\begin{align*}
\P(S;n)&=\{\sig\in\S_n\,:\, \PS(\sig)=S\},
\end{align*}
and $\Pa(S;n)=\#\P(S;n)$. For instance,  if 
$\sig=2\,6\,5\,1\,4\,3$, we have $\PS(\sig)=\{2,5\}$ and thus $\sigma\in\P(\{2,5\};6)$.
The numbers $\Pa(S;n)$ have been the center of
the work of Billey, Burdzy and Sagan~\cite{Billey} and a
conjecture on the sets $S$ that maximize $\Pa(S;n)$ among all subsets
of $[n]$ has been proposed. The main purpose of the present paper is
to solve this conjecture. To present their conjecture, it is more
convenient to use the notion of peak-composition.

 Suppose that the peak set of $\sig\in \S_n$ is
$\PS(\sig)=\{i_1<i_2<\cdots<i_k\}$. Then, the composition
$\pc(\sig)=(c_1,c_2,\ldots,c_{k+1})$ of $n$ defined by
$c_j=i_j-i_{j-1}$ with $i_0=0$ and $i_{k+1}=n$ is called the
peak-composition of $\sig$. If $\c$ is a composition of $[n]$, we
set
\begin{align*}
\P(\c)&=\{\sig\in\S_n\,:\, \pc(\sig)=\c\},
\end{align*}
and $\Pa(\c)=\#\P(\c)$.  For instance, if $\sig=2\,6\,5\,1\,4\,3$, we have $\pc(\sig)=\{2,3,1\}$ and thus $\sigma\in\P(2,3,1)$.
Note that if $S=\{i_1<i_2<\cdots<i_k\}$ is a
subset of $[n]$ then $\P(S;n)=\P(\c)$ where $\c$ is defined as
above. For instance, we have $\P(\{2,5\};6)=\P(2,3,1)$.
The size of a composition is the sum of its parts. In the
sequel, we say that a composition $\c$ is maximal if $\Pa(\b)\leq
\Pa(\c)$ for any composition $\b$ having the same size as $\c$. The
following result has been conjectured by Billey, Burdzy and
Sagan.

\begin{thm}\label{thm:main}
For $n\geq1$,  let $\C^*(n)$ denote the set of maximal compositions
of $n$. For $\ell \geq 2$, we have
\begin{itemize}
\item $\C^*(3\ell)=\{(3^{\ell}),(4,3^{\ell-2},2)\}$,
\item $\C^*(3\ell+1)=\{(3^{s},2,3^{t},2)\;:\;s\geq1,\; t\geq0,\;s+t=\ell-1\}$,
\item $\C^*(3\ell+2)=\{(3^{\ell},2)\}$.
\end{itemize}
Equivalently, for $n\geq 6$, the sets that maximize $\Pa(S;n)$ are
\begin{itemize}
\item if $n\equiv0\pmod{3}$, $\{3,6,9,\ldots\}\cap \{1,2,\ldots n-1\}$ and $\{4,7,10,\ldots\}\cap \{1,2,\ldots n-1\}$,
\item if $n\equiv1\pmod{3}$, $\{3,6,9,\ldots,3s,3s+2,3s+5,\ldots\}\cap \{1,2,\ldots n-1\}$ with $1\leq s \leq \lfloor\tfrac{n}{3}\rfloor-1$,
\item if $n\equiv2\pmod{3}$, $\{3,6,9,\ldots\}\cap \{1,2,\ldots n-1\}$.
\end{itemize}
\end{thm}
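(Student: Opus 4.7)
The plan is to compare $\Pa(\c)$ across compositions via local modifications on $\c$ and whittle the candidate maximizers down to the short list matching the three cases of the theorem.

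The first step is to derive a workable formula. Every permutation $\sig$ with peak composition $\c=(c_1,\dots,c_{k+1})$ is characterized by its up-down sequence $F(\sig)\in\{+,-\}^{n-1}$; the condition $\pc(\sig)=\c$ forces $F(\sig)$ to exhibit the pattern $+-$ at each of the $k$ required adjacent pairs $(i_j-1,i_j)$ (where $i_j=c_1+\cdots+c_j$) and to avoid any other $+-$, meaning each of the blocks of $F$ between consecutive required positions must have the shape $-^a+^b$. This yields exactly $(c_1-1)(c_2-1)\cdots(c_k-1)\,c_{k+1}$ admissible sequences, and
\begin{equation*}
\Pa(\c) \;=\; \sum_{F\text{ admissible}}\beta(F),
\end{equation*}
where $\beta(F)$ is the classical Niven--Viennot count of permutations with up-down sequence $F$. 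Expanding $\beta$ by its standard signed sum of multinomials gives an expression whose dependence on $\c$ can be read off part-by-part.

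The second step consists of exchange inequalities: comparing $\Pa(\c)$ with $\Pa(\c')$ when $\c'$ differs from $\c$ by replacing two adjacent parts $(c_i,c_{i+1})$ by $(c_i',c_{i+1}')$ of the same sum. I aim to show that (a) any interior part $\geq 4$ can be split to strictly increase $\Pa$; (b) parts equal to $1$ (only possible as the last part) are suboptimal, except where compensated by the reversal below; and (c) among compositions with parts in $\{2,3\}$, the 2's must lie at the positions listed in the theorem. A key ingredient is the reversal involution $(c_1,\ldots,c_{k+1})\mapsto(c_{k+1}+1,c_k,\ldots,c_2,c_1-1)$, induced by reversing the underlying permutation: it preserves $\Pa$, it identifies $(3^\ell)$ with $(4,3^{\ell-2},2)$, it maps the family for $n=3\ell+1$ into itself, and it fixes $(3^\ell,2)$ for $n=3\ell+2$, producing exactly the equality structure claimed. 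After the candidate set is cut down via (a)--(c) to compositions with parts in $\{2,3\}$ (plus possibly a single $4$ from the reversal), a residue-by-residue case analysis modulo $3$ finishes the proof.

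The main obstacle I anticipate is the sharp form of (c). Because the theorem asserts \emph{multiple} maximal compositions for $n\equiv 0,1\pmod 3$, the exchange inequalities cannot merely produce strict improvement; they must track equality cases exactly. In particular, for $n=3\ell+1$ the family $\{(3^s,2,3^t,2):s\geq 1,\ s+t=\ell-1\}$ contains $\ell-1$ elements all achieving the maximum, so every ``move'' within the family must be an exact equality---a stringent constraint that forces the signed-multinomial identity arising from the expansion of $\sum_F \beta(F)$ to be handled with some delicacy.
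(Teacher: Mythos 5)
Your proposal is a plan rather than a proof: the entire content of the theorem is concentrated in the ``exchange inequalities'' (a)--(c), and these are asserted, not established. Two of the gaps are not merely technical. First, claim (a) (``any interior part $\geq 4$ can be split to strictly increase $\Pa$'') is false as stated: splitting a $4$ into $2+2$ can strictly \emph{decrease} the count (one computes $\Pa(2,2,c_2,\ldots)<\Pa(4,c_2,\ldots)$ in general; e.g.\ the vectors $\big(\Ini_{\cdot,b}(2,2)\big)_b=[0,1,2,2]$ and $\big(\Ini_{\cdot,b}(4)\big)_b=[0,1,2,4]$ go the wrong way), and indeed $(4,3^{\ell-2},2)$ is itself maximal, so no blanket splitting argument can work; eliminating a $4$ in a strictly interior position requires first commuting it past blocks of $3$'s, which is not a local exchange. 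Second, and more seriously, you have no mechanism for the exact equalities. The reversal involution $(c_1,\ldots,c_{k+1})\mapsto(c_{k+1}+1,c_k,\ldots,c_2,c_1-1)$ sends $(3^s,2,3^t,2)$ to $(3^{t+1},2,3^{s-1},2)$, so its orbits on the family for $n=3\ell+1$ have size at most $2$; it cannot identify all $\ell-1$ members of $\{(3^s,2,3^t,2):s+t=\ell-1\}$, contrary to what you claim. Extracting either the strict inequalities or the exact equalities from the alternating (signed multinomial) expansion of $\sum_F\beta(F)$ ``part-by-part'' is precisely the hard step, and you give no way to do it.

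The paper resolves both difficulties with tools absent from your outline. The comparisons are done by explicit injections on permutations refined by first and last letters (the quantities $\Int_{a,b}(\c)$ and $\Ini_{\cdot,b}(\c)$), which localize correctly under concatenation and yield the strict inequalities after finitely many machine-checked cases. The equalities come from a factorization lemma: if $\b\neq\ep$ then
\[
\Pa(\a\oplus(3)\oplus\b)=\binom{|\a|+|\b|+3}{|\a|+1}\,\Pa(\a\oplus(1))\,\Pa((2)\oplus\b),
\]
proved by a concatenation bijection. Iterating it shows $\Pa$ is invariant under arbitrary permutation of the factors sitting between parts equal to $3$, which simultaneously produces the full chain of equalities in $\C^*(3\ell+1)$, the identity $\Pa(3^{\ell})=\Pa(4,3^{\ell-2},2)$, and closed forms for all the maximal values. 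You would need to either discover this factorization (or an equivalent exact identity) or find another route to the equalities; without it the approach cannot close.
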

In the above theorem, exponent just indicates iteration. For
instance, $(3^{2},4)$ is for $(3,3,4)$.
 We will also determine the cardinality of $\P(\c)$ when $\c$ is a maximal composition. The expressions obtained
are extremely simple.
\begin{thm}\label{thm:Maximalclasses}
 Suppose $n\geq6$ and $\c\in\C^*(n)$. Set $\ell=\lfloor\tfrac{n}{3}\rfloor$. Then, we have,
  \begin{itemize}
\item if $n\equiv0\pmod{3}$, $\Pa(\c)=\tfrac{1}{5}3^{2-\ell}\,n!$,
\item if $n\equiv1\pmod{3}$, $\Pa(\c)=\tfrac{2}{5} 3^{1-\ell}\,n!$,
\item if $n\equiv2\pmod{3}$, $\Pa(\c)=3^{-\ell}\,n!$.
\end{itemize}
\end{thm}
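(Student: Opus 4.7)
The plan is to specialize the explicit formula for $\Pa(\c)$ developed earlier in the paper (in the course of proving Theorem~\ref{thm:main}) to each of the three families of maximal compositions. Since Theorem~\ref{thm:main} lists all maximal compositions explicitly, the proof of Theorem~\ref{thm:Maximalclasses} reduces to a finite case analysis, one family per residue of $n$ modulo~$3$.

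I would begin with the case $n \equiv 2 \pmod 3$, where $\C^*(n) = \{(3^\ell,2)\}$ is a singleton and the composition is as regular as possible. Expect the formula to be of product type, with each interior $3$-block contributing a factor $1/3$, so that $\Pa(3^\ell,2) = 3^{-\ell}n!$ emerges directly. For $n \equiv 0 \pmod 3$, one evaluates $\Pa(3^\ell)$ and $\Pa(4,3^{\ell-2},2)$ and checks that both equal $\tfrac{1}{5}3^{2-\ell}n!$; the agreement of the two values is itself a consistency check against Theorem~\ref{thm:main}, which places both compositions in $\C^*(3\ell)$. For $n \equiv 1 \pmod 3$, one applies the formula to $(3^s,2,3^t,2)$ and verifies that the answer $\tfrac{2}{5}3^{1-\ell}n!$ depends only on $s+t=\ell-1$, not on $s$ and $t$ separately---a quantitative reflection of the fact that all such compositions are simultaneously maximal.

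The main obstacle is the algebraic bookkeeping required to extract the rational prefactors $\tfrac{1}{5}$ and $\tfrac{2}{5}$. These constants arise from cancellations between the boundary contributions of the $2$-blocks (or of a $4$-block at the leading end) and the product of identical interior $3$-block factors; in each family a short but careful simplification is needed. Once the general formula is in hand, however, the verification is essentially mechanical, and much of the computation may well have already been performed during the proof of Theorem~\ref{thm:main}, so that Theorem~\ref{thm:Maximalclasses} can be presented as a compact corollary.
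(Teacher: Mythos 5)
Your proposal matches the paper's proof: the paper derives a product-type counting formula (Lemma~\ref{lem:separation} iterated into Proposition~\ref{prop:counting-general}, where each interior $3$-block indeed contributes a factor $\tfrac13$ and the constants $\tfrac15$, $\tfrac25$ come from the boundary factors $\Pa(\c^{(0)}\oplus(1))$, $\Pa((2)\oplus\c^{(i)}\oplus(1))$, $\Pa((2)\oplus\c^{(k)})$), specializes it to the families $(3^{\ell})$, $(4,3^{\ell-2},2)$, $(3^{\ell},2)$ and $(3^{s},2,3^{t},2)$ in Corollary~\ref{cor:ExactPc}, and reads off Theorem~\ref{thm:Maximalclasses} from the list of maximal compositions in Theorem~\ref{thm:main}. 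Your observation that the answer for $(3^{s},2,3^{t},2)$ depends only on $s+t$ is likewise exactly what the paper's formula~\eqref{eq:P-3s23t2} exhibits.
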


Our proof of Theorem~\ref{thm:main} is essentially  based on some comparison lemmas presented in Section~2 
and a counting formula for the number $\Pa(\c)$ which is efficient when $\c$
contains only short patterns of elements distinct from 3. 
 In Section~3, we show that a maximal composition can not have a part greater than $4$.
In Section~4, we provide further patterns that a maximal composition must avoid. 
In Section~5, we preset our counting formula from which we deduce Theorem~\ref{thm:Maximalclasses}.
Finally, the proof of Theorem~\ref{thm:main} is completed in Section~6.

\section{Comparison Lemmas}
In this section, we provide some comparison lemmas. We first
set up some additional terminology. The concatenation of two compositions $\c_1$ and $\c_2$
is denoted by $\c_1\oplus\c_2$. For instance,
$(4,3,2)\oplus (2,3)=(4,3,2,2,3)$. 
We assume the existence of an empty (with 0 part) composition $\ep$
such that $\c\oplus\ep=\ep\oplus\c=\c$.
 We say that a composition $\c$
is admissible if $\P(\c)\neq \emptyset$. It is not hard to prove  the
following result.
\begin{fact}\label{fact:admissible}
A composition $\c=(c_1,\ldots,c_k)$ is admissible if and only if
$k=1$ or $c_1,\ldots,c_{k-1}\geq 2$ and $c_k\geq 1$.
\end{fact}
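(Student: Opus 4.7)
The plan is straightforward and hinges on an explicit construction in the non-trivial direction. For $k=1$ the identity permutation $1\,2\,\cdots\,n$ has empty peak set, hence peak-composition $(n)$, so $\c=(n)$ is trivially admissible; I may therefore assume $k\geq 2$ throughout.

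For the forward implication, I would invoke two elementary properties of peaks. First, every peak $i$ of a permutation in $\S_n$ satisfies $2\leq i\leq n-1$; applied to $i_1$ and $i_{k-1}$ this yields $c_1=i_1\geq 2$ and $c_k=n-i_{k-1}\geq 1$. Second, two peaks cannot occupy consecutive positions, since a peak at $i$ forces $\sig_i>\sig_{i+1}$ while a peak at $i+1$ would require $\sig_i<\sig_{i+1}$. Hence $i_j-i_{j-1}\geq 2$ for $2\leq j\leq k-1$, i.e.\ $c_j\geq 2$ in that range, which gives all the claimed necessary conditions.

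For the reverse implication, I would build an explicit witness $\sig\in\P(\c)$. Let $i_j=c_1+\cdots+c_j$ for $0\leq j\leq k$, and partition $[n]$ into the consecutive blocks $B_j=\{i_{j-1}+1,\ldots,i_j\}$ of respective sizes $c_j$. Fill $B_1$ with the $c_1$ largest integers of $[n]$ arranged in increasing order, then $B_2$ with the next $c_2$ largest integers in increasing order, and so on down through $B_k$. By construction $\sig$ is increasing inside each block and drops strictly across every block boundary, so its descent set is exactly $\{i_1,\ldots,i_{k-1}\}$. The hypothesis $c_j\geq 2$ for $j<k$ guarantees that the position $i_j-1$ still lies inside $B_j$, so an ascent precedes each descent and each $i_j$ is a genuine peak; since no other descents occur, no extraneous peaks arise, and $\pc(\sig)=\c$.

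I do not foresee any real obstacle here. The only mildly delicate case is $c_k=1$, where $B_k=\{n\}$ is a singleton; but then $\sig_{n-1}$ is the largest element of $B_{k-1}$ and $\sig_n$ is the smallest element of $[n]$, so the descent at $n-1=i_{k-1}$ is preserved and the construction still produces the required peak at $i_{k-1}$.
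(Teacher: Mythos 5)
Your proof is correct and complete: the necessity argument (peaks lie in $\{2,\ldots,n-1\}$ and cannot be adjacent) and the explicit decreasing-block witness for sufficiency both check out, including the boundary case $c_k=1$. The paper states this fact without proof ("it is not hard to prove"), and your argument is exactly the standard one the author is implicitly relying on, so there is nothing to compare beyond noting that you have supplied the omitted details.
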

Given a composition $\c=(c_1,c_2,\ldots,c_k)$, we set $r'
\c=\c$ if $k=1$ and $r'\c=(c_{k}+1,c_{k-1},\ldots,c_2,c_{1}-1)$ if $k\geq 2$.
Considering the reverse of a permutation, we
obtain the following result.
\begin{fact}\label{fact:rev-compo}
For any composition $\c$, we have $\Pa(\c)=\Pa(r'\c)$. In particular,
$\c$ is maximal if and only if $r'\c$ is maximal.
\end{fact}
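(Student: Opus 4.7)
The plan is to exhibit an explicit involution on $\S_n$ that realises the proposed identity, namely the reversal map $\sigma=\sigma_1\sigma_2\cdots\sigma_n \mapsto \sigma^R:=\sigma_n\sigma_{n-1}\cdots\sigma_1$. This is a bijection of $\S_n$ to itself and $(\sigma^R)^R=\sigma$, so the hard work is just to verify that it restricts to a bijection $\P(\c)\to\P(r'\c)$.

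First I would compute how peaks transform under reversal. Directly from the definition, an index $j\in\{2,\ldots,n-1\}$ is a peak of $\sigma^R$ iff $\sigma^R_{j-1}<\sigma^R_j>\sigma^R_{j+1}$, which is the same as $\sigma_{n-j+2}<\sigma_{n-j+1}>\sigma_{n-j}$, i.e.\ iff $n-j+1$ is a peak of $\sigma$. So if $\PS(\sigma)=\{i_1<i_2<\cdots<i_{k-1}\}$ then $\PS(\sigma^R)=\{n-i_{k-1}+1<n-i_{k-2}+1<\cdots<n-i_1+1\}$. Translating to peak-compositions: writing $\c=(c_1,\ldots,c_k)$ with $i_j=c_1+\cdots+c_j$ and using $i_0=0$, $i_k=n$, a straightforward telescoping gives
\begin{align*}
\pc(\sigma^R)=\bigl(n-i_{k-1}+1,\; i_{k-1}-i_{k-2},\; \ldots,\; i_2-i_1,\; i_1-1\bigr)=(c_k+1,c_{k-1},\ldots,c_2,c_1-1),
\end{align*}
which is exactly $r'\c$. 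The case $k=1$ is trivial since $r'\c=\c$ by definition.

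It follows that $\sigma\mapsto\sigma^R$ restricts to a bijection $\P(\c)\to\P(r'\c)$, whence $\Pa(\c)=\Pa(r'\c)$. For the second assertion, I would note that $r'$ preserves the size of a composition (the sum $(c_k+1)+c_{k-1}+\cdots+c_2+(c_1-1)$ equals $c_1+\cdots+c_k$) and is an involution, so the inequality $\Pa(\b)\leq \Pa(\c)$ for every composition $\b$ of size $n$ is equivalent, upon replacing $\b$ by $r'\b$, to the corresponding inequality for $r'\c$. Hence $\c$ is maximal iff $r'\c$ is.

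There is essentially no obstacle here: the only point requiring care is the bookkeeping of indices when translating the peak-set transformation $i\mapsto n-i+1$ into the composition language, in particular the $+1$ and $-1$ appearing at the two ends of $r'\c$, which come respectively from the distance from $1$ to the first peak of $\sigma^R$ and from the last peak of $\sigma^R$ to $n$. (Admissibility of $r'\c$ when $\c$ is admissible, via Fact~\ref{fact:admissible}, is automatic from the bijection and need not be checked separately, but it is also immediate from the explicit form of $r'\c$.)
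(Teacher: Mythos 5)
Your proof is correct and is precisely the argument the paper has in mind: the paper states only that the fact follows ``considering the reverse of a permutation,'' and your write-up supplies exactly that reversal bijection $\sigma\mapsto\sigma^R$ together with the index bookkeeping showing $\pc(\sigma^R)=r'\c$. The verification that $r'$ preserves size and is an involution, needed for the maximality statement, is also correct.
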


  For $a,b\in\mathbb{P}$ and  $\c$ a composition
of an integer $n\geq 1$, we let
\begin{align*}
Ini_{a,b}(\c)&
=\{\sigma\in\P(\c) \,:\, a=\sig_1,\;\sig_{n-1}<\sig_n=b \},\\
Int_{a,b}(\c)&= \{\sigma\in\P(\c) \,:\,
a=\sig_1>\sig_2,\;\sig_{n-1}<\sig_n=b \},
\end{align*}
and $Ini_{\cdot,b}(\c)=\bigcup_{a=1}^n Ini_{a,b}(\c)$. We set
$\Ini_{a,b}={\#}Ini_{a,b}$, $\Ini_{\cdot,b}={\#} Ini_{\cdot,b}$ and $\Int_{a,b}={\#}
Int_{a,b}$. We also let $1+\c$ denote the composition of $n+1$
obtained from $\c$ by increasing the first part of $\c$ by 1. So, if
$\c=(c_1,\ldots,c_k)$, $1+\c=(c_1+1,c_2,\ldots,c_k)$. For instance,
$1+(2,4,3,2)=(3,4,3,2)$.

 If ${\x}=(x_1,x_2,\ldots,x_k)$
and ${\y}=(y_1,y_2,\ldots,y_k)$ are two sequences of the same length, we
write ${\x}<{\y}$ if $x_i\leq y_i$ for $1\leq i\leq k$ and $\x\neq
\y$.

\begin{prop}\label{prop:comparison2}
Let $\c$ be a composition of an integer $n\geq1$.
Suppose that there exists a composition $\c'$ of $n$ such that

(1) $\Int_{a,b}(1+\c)\leq \Int_{a,b}(1+\c')$ for all
$a,b\in[n+1]$ and there are two indices $u,v$ such that
$\Int_{u,v}(1+\c)< \Int_{u,v}(1+\c')$: then, for any nonempty
compositions $\a$ and $\b$ such that $\a\oplus\c\oplus\b$ is
admissible, $$
\Pa(\a\oplus\c\oplus\b)<\Pa(\a\oplus\c'\oplus\b),
$$
and thus, $\a\oplus\c\oplus\b$ is not maximal.

(2)$\big(\Ini_{\cdot,b}(\c)\big)_{1\leq b\leq n}<
\big(\Ini_{\cdot,b}(\c')\big)_{1\leq b\leq n}$: then, for
any nonempty composition $\b$ such that $\c\oplus\b$ is admissible,
$$
\Pa(\c\oplus\b)<\Pa(\c'\oplus\b),
$$
and thus, $\c\oplus\b$ is not maximal.

\end{prop}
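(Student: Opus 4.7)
The plan is to prove both parts by setting up a counting formula that expresses the left-hand side $\Pa$ as a nonnegative linear combination of the quantities controlled by the hypothesis, with coefficients that depend only on the outer compositions $\a$ and $\b$ and the common size $n=|\c|=|\c'|$. Since those coefficients will then be identical for $\c$ and $\c'$, the hypothesis will transfer to the desired inequality term by term.

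For part~(1), I would first exhibit a bijective decomposition of each $\sigma\in\P(\a\oplus\c\oplus\b)$ into three overlapping sub-words meeting at peaks: $L=\sigma_1\cdots\sigma_{|\a|}$, $M=\sigma_{|\a|}\cdots\sigma_{|\a|+n}$ and $R=\sigma_{|\a|+n}\cdots\sigma_{N}$, where $N=|\a|+n+|\b|$. A direct inspection of peak positions shows that the normalizations satisfy $\pc(\bar L)=\a$ with $\bar L_{|\a|-1}<\bar L_{|\a|}$; $\pc(\bar M)=1+\c$ with $\bar M_1>\bar M_2$ and $\bar M_n<\bar M_{n+1}$; and $\pc(\bar R)=1+\b$ with $\bar R_1>\bar R_2$. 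In particular $\bar M\in\Int_{a,b}(1+\c)$, where $a$ and $b$ are the ranks in the value set of $M$ of the shared peak values $\sigma_{|\a|}$ and $\sigma_{|\a|+n}$. Conversely, $\sigma$ is determined by the data $(V_M,\text{split},\bar L,\bar M,\bar R)$, where $V_M$ is the value set of $M$ and \emph{split} means a partition of $[N]\setminus V_M$ into left (size $|\a|-1$) and right (size $|\b|$) extras. Summing over these choices with $(a,b)$ fixed yields
\[
\Pa(\a\oplus\c\oplus\b)=\sum_{a,b}\gamma_{a,b}(\a,\b)\,\Int_{a,b}(1+\c),
\]
where $\gamma_{a,b}(\a,\b)$ counts the compatible tuples $(V_M,\text{split},\bar L,\bar R)$ and is manifestly independent of $\c$. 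The identical formula holds for $\c'$, so subtracting and invoking the hypothesis will give the weak inequality $\Pa(\a\oplus\c\oplus\b)\le\Pa(\a\oplus\c'\oplus\b)$.

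The hard part is upgrading this to a strict inequality, which amounts to showing $\gamma_{u,v}(\a,\b)>0$ at the pair $(u,v)$ where $\Int_{u,v}(1+\c)<\Int_{u,v}(1+\c')$. The strict inequality forces $\Int_{u,v}(1+\c')\ge 1$, so the target ranks are realizable (in particular $u,v\ge2$). Admissibility of $\a\oplus\c\oplus\b$ forces admissibility of $\a$ and of $1+\b$, so permutations with those peak-compositions exist; I would then show that $V_M$ and the split can be tuned so that the rank of $\sigma_{|\a|}$ in $V_L$ is attained by some $\bar L\in\P(\a)$ ending with an ascent, and symmetrically on the right. This reduces to a direct but slightly technical case analysis on the achievable boundary ranks $r_u\in\{2,\ldots,|\a|\}$ and the analogous rank for the right block.

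Part~(2) will go through by the same recipe with a simpler two-block decomposition $L=\sigma_1\cdots\sigma_{|\c|}$ and $R=\sigma_{|\c|}\cdots\sigma_N$ sharing only the single peak value $\sigma_{|\c|}$. Here $\pc(\bar L)=\c$ with a final ascent, hence $\bar L\in\Ini_{\cdot,b}(\c)$ with $b$ the rank of $\sigma_{|\c|}$ in the value set of $L$, while $\pc(\bar R)=1+\b$ starts with a descent. Summing over the remaining data will produce
\[
\Pa(\c\oplus\b)=\sum_{b=1}^{n}\delta_b(\b)\,\Ini_{\cdot,b}(\c),
\]
with $\delta_b(\b)$ independent of $\c$ and strictly positive at each index $b$ that is admissibly realizable. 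The hypothesis of strict componentwise inequality of the vectors $(\Ini_{\cdot,b}(\c))_{b}$ and $(\Ini_{\cdot,b}(\c'))_{b}$ then forces $\Pa(\c\oplus\b)<\Pa(\c'\oplus\b)$.
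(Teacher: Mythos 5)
Your plan is correct and is in substance the paper's own proof, recast as a counting identity rather than as an explicit injection: the paper chooses a family of injections $\phi_{a,b}\colon Int_{a,b}(1+\c)\to Int_{a,b}(1+\c')$ and maps each $\sig\in\P(\a\oplus\c\oplus\b)$ to the permutation obtained by keeping the outer blocks and the value set of the middle block fixed while replacing the normalized middle block $\tau$ by $\phi_{\tau_1,\tau_{n+1}}(\tau)$ --- which is exactly your observation that the coefficients $\gamma_{a,b}(\a,\b)$ in $\Pa(\a\oplus\c\oplus\b)=\sum_{a,b}\gamma_{a,b}(\a,\b)\,\Int_{a,b}(1+\c)$ do not depend on $\c$. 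The one step you defer, positivity of $\gamma_{u,v}(\a,\b)$, is precisely the content of the paper's Lemma~\ref{lem:comparison}, and it needs no case analysis on achievable boundary ranks: with $N=|\a|+n+|\b|$ and $w=\min(u,v)$, take $V_M=\{1,\dots,w-1\}\cup\{N-n-1+w,\dots,N\}$, so that every element of $[N]\setminus V_M$ lies below both shared values; then the shared values are the maxima of their respective outer value sets, and the canonical permutations in $Ini_{\cdot,|\a|}(\a)$ and (after reversal) in $Ini_{\cdot,|\b|+1}\bigl(r'(1+\b)\bigr)$ supply a $\bar L$ ending with an ascent and a $\bar R$ beginning with a descent, making $\gamma_{u,v}(\a,\b)>0$. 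The same one-line choice closes the realizability claim you need in part~(2), where only the right block requires it.
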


 To simplify the readability of the proof of the above result,
 we first state a preliminary lemma.
In the sequel, a permutation $\tau=\tau_1\cdots\tau_n$ of a set
$S\subset\mathbb{P}$ with cardinality $n$ is said to be
order-isomorphic to a permutation $\sigma=\sig_1\cdots\sig_n$ in
$\S_n$ if for $1\leq i<j\leq n$, $\tau_i<\tau_j$ is equivalent to
$\sigma_i<\sigma_j$.

\begin{lem}\label{lem:comparison}
For $i=1,2,3$, let $\c^{(i)}$ be a  composition of a
positive integer $n_i$ such that
$\c^{(1)}\oplus\c^{(2)}\oplus\c^{(3)}$ is admissible. Suppose
$\Int_{u,v}(1+\c^{(2)})>0$, then for each permutation $\tau\in
Int_{u,v}(1+\c^{(2)})$ there exists a permutation $\sig$ in
$\P(\c^{(1)}\oplus\c^{(2)}\oplus\c^{(3)})$ such that
$\sigma_{n_1}\sigma_{n_1+1}\cdots\sigma_{n_1+n_2}$  is order
isomorphic to $\tau$.
\end{lem}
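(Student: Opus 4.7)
The plan is to construct $\sigma$ explicitly by partitioning $[n_1+n_2+n_3]$ into three value ranges (one per region) and then stitching together admissible pieces. Set $n=n_1+n_2+n_3$. First, reserve the top $n_2+1$ values $\{n-n_2,n-n_2+1,\ldots,n\}$ for the middle positions $n_1,n_1+1,\ldots,n_1+n_2$ by setting
\[
\sigma_{n_1+i-1}=\tau_i+(n-n_2-1),\qquad 1\le i\le n_2+1,
\]
so the middle block is order-isomorphic to $\tau$. The left positions $1,\ldots,n_1-1$ will receive the values $\{1,\ldots,n_1-1\}$ and the right positions $n_1+n_2+1,\ldots,n$ will receive $\{n_1,\ldots,n_1+n_3-1\}$.

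Write $\c^{(1)}=(c_1,\ldots,c_{k+1})$. By Fact~\ref{fact:admissible} applied to the admissible composition $\c^{(1)}\oplus\c^{(2)}\oplus\c^{(3)}$, every part of $\c^{(1)}$ is at least $2$; in particular $c_{k+1}\ge 2$, so the composition $\c'=(c_1,\ldots,c_k,c_{k+1}-1)$ of $n_1-1$ is itself admissible. Pick any $\lambda\in\P(\c')$ and set $\sigma_i=\lambda_i$ for $1\le i\le n_1-1$. For the right piece, $\c^{(3)}$ is admissible on its own, since its non-last parts are interior parts of the admissible concatenation (hence $\ge 2$) and its last part is the last part of the whole (hence $\ge 1$). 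Pick any $\rho\in\P(\c^{(3)})$ and set $\sigma_{n_1+n_2+i}=\rho_i+(n_1-1)$ for $1\le i\le n_3$.

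It then remains to verify that $\sigma$ has peak composition $\c^{(1)}\oplus\c^{(2)}\oplus\c^{(3)}$. Since $\sigma_j<\sigma_{n_1}$ for every $j<n_1$ and $\sigma_{n_1+n_2}>\sigma_\ell$ for every $\ell>n_1+n_2$ by the choice of value ranges, and since $\tau_1>\tau_2$ and $\tau_{n_2}<\tau_{n_2+1}$ because $\tau\in Int_{u,v}(1+\c^{(2)})$, positions $n_1$ and $n_1+n_2$ are peaks of $\sigma$ while positions $n_1-1$ and $n_1+n_2+1$ are not. The interior peaks of the three blocks are inherited directly from $\lambda$, from $\tau$ (via the order-isomorphism and the definition of $1+\c^{(2)}$), and from $\rho$, and they match exactly the peak positions prescribed by $\c^{(1)}\oplus\c^{(2)}\oplus\c^{(3)}$. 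The only real subtlety is the bookkeeping at the two interfaces: the admissibility conditions $c_{k+1}\ge 2$ and $e_1\ge 2$ (for $\c^{(3)}=(e_1,\ldots)$) provide just enough buffer between the last peak of $\lambda$ and position $n_1$, and between position $n_1+n_2$ and the first peak of $\rho$, for the boundary peak pattern to come out as required.
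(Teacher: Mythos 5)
Your construction is correct and is essentially the same as the paper's: both place the top $n_2+1$ values on the middle block (so that $\tau_1>\tau_2$ and $\tau_{n_2}<\tau_{n_2+1}$ force $n_1$ and $n_1+n_2$ to be peaks while $n_1-1$ and $n_1+n_2+1$ are not), and fill the outer blocks with permutations realizing the remaining peak positions. The only cosmetic difference is that you build the side blocks directly from $\P\bigl((c_1,\ldots,c_k,c_{k+1}-1)\bigr)$ and $\P(\c^{(3)})$, whereas the paper extracts them from elements of $Ini_{\cdot,n_1}(\c^{(1)})$ and $Ini_{\cdot,n_3+1}\bigl(r'(1+\c^{(3)})\bigr)$.
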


\begin{proof} We first choose two permutations $\gamma\in
Ini_{\cdot,n_1}(\c^{(1)})$ and $\beta\in
Ini_{\cdot,n_3+1}\big(r'(1+\c^{(3)})\big)$ (this choice is always
possible since, as it is easily seen,
$Int_{\cdot,n}(\c)\neq\emptyset$ for any composition $\c$ of $n$).
Then set $\sig_1,\ldots,\sig_{n_1-1}=\gamma_1,\ldots,\gamma_{n_1-1}$
(resp.,
$\sig_{n_1+n_2+n_3},\sig_{n_1+n_2+n_3-1},\ldots,\sig_{n_1+n_2+1}=\beta_1+n_1-1,\ldots,\beta_{n_3}+n_1-1$).
We then set
$\sig_{n_1},\sig_{n_1+1},\ldots,\sig_{n_1+n_2}=\tau_{1}+n_1+n_3-1,\tau_2+n_1+n_3-1,\ldots,\tau_{n_2+1}+n_1+n_3-1$.
It is easily checked that the permutation
$\sigma=\sig_1\cdots\sig_{n_1+n_2+n_3}$ is in
$\P(\c^{(1)}\oplus\c^{(2)}\oplus\c^{(3)})$ and
$\sigma_{n_1}\sigma_{n_1+1}\cdots\sigma_{n_1+n_2}$  is order
isomorphic to $\tau$.
 \end{proof}

\noindent\emph{Proof of Proposition~\ref{prop:comparison2}.} (1) Let $\a$ and $\b$ be two compositions 
of positive integers $r$ and $s$ such that $\a\oplus\c\oplus\b$ is
admissible. 
By our hypothesis,  we can consider a family of injections
$\big(\phi_{c,d}\big)_{1\leq c,d\leq n+1}$,
$\phi_{c,d}:Int_{c,d}(1+\c)\mapsto Int_{c,d}(1+\c')$, such that
$\phi_{u,v}$ is not surjective. Then, consider the function $\Gamma$
which associates to a permutation $\sig\in\P(\a\oplus\c\oplus\b)$
the permutation $\sig'$ defined by
$\sig'_1,\ldots,\sig'_{r-1}=\sig_1,\ldots,\sig_{r-1}$,
$\sig'_{r+n+1},\ldots,\sig'_{r+n+s}=\sig_{r+n+1},\ldots,\sig_{r+n+s}$,
and $\sig'_{r}\sig'_{r+1}\cdots\sig'_{r+n}$ is the unique
permutation of the set $\{\sig_{r},\sig_{r+1},\ldots,\sig_{r+n}\}$
which is order isomorphic to $\phi_{\tau_1,\tau_{n+1}}(\tau)$, where
$\tau$ is the permutation of $[n+1]$ which is order isomorphic to
$\sig_{r}\sig_{r+1}\cdots\sig_{r+n}$. It is easily checked that
$\Gamma$ is a well-defined function from $\P(\a\oplus\c\oplus\b)$ to
$P(\a\oplus\c'\oplus\b)$ which is injective but, by
Lemma~\ref{lem:comparison}, not surjective.

 The proof of (2) is  easier than and very similar to the proof of (1), and so is
left to the reader.
 \qed

A simple but important consequence of the above result is given in
the next section.

\section{The largest part in a maximal composition}

\begin{prop}\label{prop:part6}
Let $\c=(c_1,\ldots,c_k)$ be a composition of $n$. If $k=1$ and $n\geq 5$, then $\c$ is not maximal.
If $k\geq 2$ and
there exists $i$, $1\leq i\leq k-1$, such that $c_i\geq 5$ or
$c_k\geq 4$, then $\c$ is not maximal.
\end{prop}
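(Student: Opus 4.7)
The plan is to show non-maximality of any offending $\c$ by producing a composition of the same integer $n$ with strictly more permutations. The single-part case $k=1$ must be treated by hand (Proposition~\ref{prop:comparison2} requires a nonempty $\b$ and cannot apply); for $k\geq 2$, Proposition~\ref{prop:comparison2} is the main tool, supplemented by Fact~\ref{fact:rev-compo} to flip a bad final part into a bad initial one.

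\emph{The case $k=1$, $n\geq 5$.} Every permutation in $\P((n))$ has no peak, hence is a decreasing run followed by an increasing one; conditioning on the position of the global minimum and then on the subset of $\{2,\ldots,n\}$ placed to its left gives $\Pa((n))=2^{n-1}$. I would then pick an explicit admissible composition $\c''$ of $n$ --- for instance $\c''=(2,n-2)$, whose members have a forced peak at position~$2$ and are counted by conditioning on the peak value --- and verify $\Pa(\c'')>2^{n-1}$ for every $n\geq 5$.

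\emph{The case $k\geq 2$.} Split by the location of the offending part. (i) If $c_i\geq 5$ for some interior $i$ with $2\leq i\leq k-1$, apply Proposition~\ref{prop:comparison2}(1) with $\a=(c_1,\ldots,c_{i-1})$, $\b=(c_{i+1},\ldots,c_k)$, $\c=(c_i)$ and a two-part replacement, e.g.\ $\c'=(2,c_i-2)$. (ii) If $c_1\geq 5$, then $\a$ above would be empty, so instead apply Proposition~\ref{prop:comparison2}(2) with $\c=(c_1)$, $\b=(c_2,\ldots,c_k)$ and the analogous replacement. (iii) If $c_k\geq 4$, note that $r'\c=(c_k+1,c_{k-1},\ldots,c_2,c_1-1)$ has first part $\geq 5$ and $k\geq 2$ parts, so sub-case (ii) applied to $r'\c$ shows it is not maximal; Fact~\ref{fact:rev-compo} then transfers this non-maximality back to $\c$.

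\emph{The main obstacle.} The step I expect to be hardest is verifying the hypotheses of Proposition~\ref{prop:comparison2}, namely the componentwise dominance
\[
\Int_{a,b}((c+1))\leq \Int_{a,b}((3,c-2)) \qquad (a,b\in[c+1]),
\]
with strict inequality at some pair (for sub-case (i)), together with the analogous vector inequality
\[
\bigl(\Ini_{\cdot,b}((c))\bigr)_b<\bigl(\Ini_{\cdot,b}((2,c-2))\bigr)_b
\]
(for sub-case (ii)), both for $c\geq 5$. The left-hand sides count peak-free (valley) permutations with prescribed extremes, which are enumerated cleanly by the position of the minimum; the right-hand sides count permutations with one forced peak near the beginning and a valley-shaped tail. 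The natural strategy is to build explicit injections from the smaller set to the larger by modifying a carefully chosen position to install the required peak (in the spirit of Lemma~\ref{lem:comparison}), and then to exhibit one target permutation outside the image. The calibration issue --- and the reason the threshold is $5$ for interior and initial parts but only $4$ for the final part --- is that the injections must deliver a \emph{strict} gain precisely from $c=5$ onwards, so that an interior part of size $4$ is not wrongly excluded, matching the statement.
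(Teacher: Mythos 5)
Your plan coincides with the paper's proof in all essentials: the same case split ($k=1$ handled by the direct count $\Pa(n)=2^{n-1}$ against a two-part competitor, an interior part $\geq 5$ handled by Proposition~\ref{prop:comparison2}(1) with the replacement $(m)\mapsto(2,m-2)$, an initial part $\geq 5$ by Proposition~\ref{prop:comparison2}(2), and a final part $\geq 4$ by passing to $r'\c$ via Fact~\ref{fact:rev-compo}), with the correct awareness that the dominance fails at $4$ for interior parts. The only differences are minor: the paper takes $(3,c-3)$ rather than $(2,c-2)$ as the competitor in the initial-part and $k=1$ cases, and it settles the componentwise inequalities you identify as the main obstacle not by injections but by explicitly enumerating $\Int_{a,b}(n)$ and $\Int_{a,b}(3,n-3)$ in closed form (Lemmas~\ref{lem:parts>4-a} and~\ref{lem:parts>4-b}).
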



The above result is an immediate consequence of
Proposition~\ref{prop:comparison2} and the two following lemmas.

\begin{lem}\label{lem:parts>4-a}
For $n\geq 3$, we have
 \begin{itemize}
 \item[(1a)] $\Int_{n,a}(n)=\Int_{a,n}(n)=2^{a-2}$ if $2\leq a\leq n-1$,
 \item[(1b)] $\Int_{a,b}(n)=0$ if $a\neq n$ and $b\neq n$, or $a=1$, or
 $b=1$;\\[-0.2cm]
 \item[(2a)] $\Ini_{1,n}(n)=1$ and $\Ini_{1,b}(n)=0$ if $b< n$,
 \item[(2b)] $\Ini_{a,b}(n)=\Int_{a,b}(n)$ if $a\neq 1$.
\end{itemize}
\end{lem}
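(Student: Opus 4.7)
The plan is first to give a structural description of $\P((n))$, the set of permutations of $[n]$ with empty peak set, and then read off the four counts directly. Since a peak is an interior index where $\sigma$ ascends then immediately descends, the absence of peaks forces the descent positions to form an initial segment of $[n-1]$: once an ascent occurs, no later descent is allowed (else the ascent--descent pair would produce a peak). Hence there exists a unique $j\in[n]$ with
\[
\sigma_1>\sigma_2>\cdots>\sigma_j<\sigma_{j+1}<\cdots<\sigma_n,
\]
and the valley value $\sigma_j$, being smaller than its neighbours on both sides, must equal the global minimum~$1$.

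Such a ``V-shape'' permutation is uniquely encoded by the subset $L:=\{\sigma_1,\ldots,\sigma_{j-1}\}\subseteq\{2,\ldots,n\}$; its complement $R:=\{\sigma_{j+1},\ldots,\sigma_n\}$ is placed to the right of~$1$ in increasing order while $L$ is written to the left in decreasing order. Under this bijection, $\sigma_1=\max L$ whenever $L\neq\emptyset$, $\sigma_n=\max R$ whenever $R\neq\emptyset$, and the two conditions $\sigma_1>\sigma_2$ and $\sigma_{n-1}<\sigma_n$ translate respectively to $L\neq\emptyset$ and $R\neq\emptyset$.

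All four assertions now follow by elementary enumeration. For (1a), $\Int_{a,n}((n))$ with $2\le a\le n-1$ corresponds to subsets $L\subseteq\{2,\ldots,n\}$ with $\max L=a$ and $n\in R$, i.e., to subsets of $\{2,\ldots,a\}$ containing $a$; there are $2^{a-2}$ such sets, and $\Int_{n,a}((n))=2^{a-2}$ follows by reversing the permutation, which swaps the roles of $L$ and $R$. For (1b): if both $a\neq n$ and $b\neq n$ then $n\notin L\cup R$, impossible; if $a=1$ then $\sigma_1=1=\sigma_j$ forces $j=1$, contradicting $\sigma_1>\sigma_2$; and the case $b=1$ is symmetric. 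For (2a), $\Ini_{1,b}((n))$ forces $j=1$, whence $\sigma$ is the identity and $b=n$. For (2b), $a\neq 1$ automatically yields $L\neq\emptyset$, so $\sigma_1>\sigma_2$ comes for free and the $\Ini$ and $\Int$ conditions coincide. The entire argument is elementary; the only substantive step is the V-shape characterization, and no real obstacle remains after it is in place.
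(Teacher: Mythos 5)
Your proof is correct and takes essentially the same approach as the paper: both rest on the observation that a peak-free permutation is V-shaped with $1$ at the valley and is encoded by the choice of a subset placed (decreasingly) before the $1$, from which all four counts follow. Your version merely states this characterization once up front and reads off the cases uniformly, whereas the paper writes the specialization $\sigma=n\,w_1\,1\,w_2\,a$ directly in case (1a); the content is identical.
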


\begin{proof} (1) Suppose $2\leq a\leq n-1$. A permutation $\sigma$ in $\S_n$
is in $Int_{n,a}(n)$ if and only it writes
$\sigma=n\,w_1\,1\,w_2\,a$ where $w_1$ is decreasing and $w_2$ is
increasing. There are $2^{a-2}$ choices for $w_1$ and each choice of
$w_1$ determines uniquely $w_2$. The second assertion is left to the
reader.

(2a) Clearly, for $n\geq 2$, $Ini_{1,n}(n)=\{1\,2\,\cdots\,n\}$. If
$b<n$ and $\sigma\in\S_n$ with $\sigma_1=1$ and $\sigma_n=b$, then
$\sigma$ has at least one peak (the index $i$ such that $\sig_i=n$)
and thus $\pc(\sig)\neq (n)$, whence $Ini_{1,b}(n)=\emptyset$.

(2b) By definition, for all integers $a,b$, we have
$Int_{a,b}(n)\subseteq Ini_{a,b}(n)$. If $b=1$,
$Ini_{a,b}(n)=\emptyset$. Suppose $a\neq 1$, $b\neq 1$ and $\sig\in
Ini_{a,b}(n)$. If $\sig\notin Ini_{a,b}(n)$ (i.e., if
$\sig_1<\sig_2$), the greatest integer $p$ such that
$\sig_1<\sig_2<\cdots<\sigma_p$ is less than $n-1$ since $\sig_i=1$
for some index $i$ such that $2\leq i\leq n-1$. This implies that
$p$ is a peak of $\sig$ and thus $\pc(\sig)\neq (n)$. Consequently,
$Ini_{a,b}(n)\subseteq Int_{a,b}(n)$.
 \end{proof}

\begin{lem}\label{lem:parts>4-b}
For $n\geq 5$, we have
\begin{itemize}
\item[(1a)] $\Int_{n,n-1}(3,n-3)=\Int_{n-1,n}(3,n-3)=(n-4)2^{n-4}$,
\item[(1b)] $\Int_{n,a}(3,n-3)=(n-2-a)2^{a-2}+\chi(a\geq3)(a-1)2^{a-3}$ for
$2\leq a\leq n-2$,
\item[(1c)] $\Int_{a,n}(3,n-3)=\Int_{a,n-1}(3,n-3)+(a-1)2^{n-5}$ for
$2\leq a\leq n-2$;\\[-0.2cm]
\item[(2)] $\Ini_{1,n}(3,n-3)>1$.
\end{itemize}
\end{lem}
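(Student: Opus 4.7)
My plan is to use the combinatorial characterisation of $\P(3, n-3)$: a permutation $\sig \in \S_n$ has peak-composition $(3, n-3)$ iff its unique peak lies at position $3$, which is equivalent to asking that $\sig_2 < \sig_3 > \sig_4$ together with the suffix $\sig_3 \sig_4 \cdots \sig_n$ being \emph{V-shaped}, i.e.\ strictly decreasing from position $3$ down to a minimum at some position $j$ and then strictly increasing up to position $n$. Inside $Int_{a, b}(3, n-3)$, the extra condition $\sig_{n-1} < \sig_n$ forces $j \in \{4, \ldots, n-1\}$ (both sides of the V nonempty). Each count will therefore reduce to enumerating triples $(s, p, V)$ with $s = \sig_2$, $p = \sig_3$, and $V$ a compatible V-shape on positions $4, \ldots, n-1$.

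For (1b), with $\sig_1 = n$ and $\sig_n = a \leq n-2$, the second-largest value $n-1$ has to sit at a V-shape endpoint, and since $\sig_n = a < n-1$, this forces $p = n-1$. Splitting the remaining values $[n-2] \setminus \{s, a\}$ about the V-minimum, the elements greater than $a$ must go to the decreasing side (the increasing side terminates at $a$), while elements less than $a$ may go to either side, giving $2^{|L|-1}$ V-shapes, where $L$ is the set of remaining sub-$a$ values. Separating the sum on whether $s > a$ or $s < a$ produces the two summands of (1b); the factor $\chi(a \geq 3)$ appears precisely because $L$ is empty in the $s < a$ regime when $a = 2$. For (1a), $\sig_n = n-1$ releases the constraint on $n-1$, so I instead sum over $p$ directly; the identity $\sum_{p=3}^{n-2} (p-1) 2^{p-3} = (n-4) 2^{n-4}$ is a routine geometric manipulation. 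The equality $\Int_{n, n-1}(3, n-3) = \Int_{n-1, n}(3, n-3)$ then follows from the involution that exchanges the values at positions $1$ and $n$, which preserves the peak set because both $n$ and $n-1$ strictly exceed every interior value of $\sig$.

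For (1c) I would introduce a map $\Phi \colon Int_{a, n-1}(3, n-3) \to Int_{a, n}(3, n-3)$ by swapping the values at positions $3$ and $n$. By the maximum-at-endpoint argument, every $\sig \in Int_{a, n-1}$ satisfies $\sig_3 = n$, so $\Phi(\sig)$ is a well-defined element of $Int_{a, n}$ with $\Phi(\sig)_3 = n-1$; this gives an injection onto the subset $\{\sig \in Int_{a, n} : \sig_3 = n-1\}$. For $\sig$ in the complement we have $\sig_3 = p < n-1$, and then $n-1$ is forced onto the increasing side of the V-shape; being second-largest it must coincide with $\sig_{n-1}$. Running the $(s, p)$-enumeration of (1b) on the shortened V-shape (with $\sig_{n-1} = n-1$ and $\sig_n = n$ already placed) yields the additive term asserted in the statement.

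Part (2) is settled by exhibiting two explicit members of $Ini_{1, n}(3, n-3)$, for instance $1, 2, n-1, 3, 4, \ldots, n-2, n$ and $1, 3, n-1, 2, 4, \ldots, n-2, n$; each is immediate to check, so $\Ini_{1, n}(3, n-3) \geq 2 > 1$. The main obstacle throughout is the bookkeeping around the V-shape minimum: one must ensure $L$ is nonempty so that the min actually sits below $a$, correctly separate the $s > a$ and $s < a$ regimes (the origin of the $\chi(a \geq 3)$ factor), and in (1c) reconcile the forced placement of $n-1$ in the complement of the image of $\Phi$. Once these boundary cases are correctly partitioned, the remaining arithmetic is routine.
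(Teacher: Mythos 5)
Parts (1a), (1b) and (2) are correct and follow essentially the same route as the paper: the V-shape decomposition of the suffix, the forced placement of $n-1$ at the peak when $\sig_1=n$ and $\sig_n=a\leq n-2$, the split of the remaining sub-$a$ values about the V-minimum (with the $\chi(a\geq 3)$ factor coming from the $s<a$ regime), and two explicit witnesses for (2).

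The gap is in (1c), in the count of the complement of the image of $\Phi$. Your swap of the values in positions $3$ and $n$ is fine and correctly shows that $\#\{\sig\in Int_{a,n}(3,n-3)\,:\,\sig_3=n-1\}=\Int_{a,n-1}(3,n-3)$. But the complementary set, namely the permutations with $\sig_{n-1}=n-1$ and $\sig_n=n$, does \emph{not} have $(a-1)2^{n-5}$ elements: once $n-1$ and $n$ are both parked at the right end, the peak value $\sig_3$ is merely the largest value sent to the left leg of the V, and the ``each non-minimal value goes left or right freely'' count must exclude the assignments in which the left leg is empty, or in which its top does not exceed $\sig_2$ --- in those cases position $3$ fails to be a peak. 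Concretely, for $n=6$ and $a=2$ one checks directly that $Int_{2,6}(3,3)=\{2\,1\,4\,3\,5\,6,\;2\,1\,5\,3\,4\,6,\;2\,1\,5\,4\,3\,6\}$, so $\Int_{2,6}(3,3)=3$, whereas $\Int_{2,5}(3,3)+(a-1)2^{n-5}=2+2=4$; similarly $\Int_{2,5}(3,2)=1$ while the right-hand side of (1c) gives $2$. So the identity (1c) as stated is in fact false and cannot be proved. You should know that the paper's own proof stumbles on exactly the same subset: it asserts that the permutations with $\sig_{n-1}=n-1$ are equinumerous with $Int_{a,n-1}(3,n-3)$, which fails for the same reason (your $\Phi$ is, ironically, the correct half of the argument that the paper gets wrong, and vice versa). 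The damage is contained, since all that is used downstream (in Proposition~3.1) is the inequality $\Int_{a,n}(n)=2^{a-2}\leq\Int_{a,n}(3,n-3)$, which survives with the corrected, smaller value of $\Int_{a,n}(3,n-3)$; but as a proof of the stated equality, your (1c) --- like the paper's --- does not go through, and the honest fix is to restate (1c) with the corrected count of the $\sig_{n-1}=n-1$ case.
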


\begin{proof}
 (1a) Clearly, $\Int_{n,n-1}(3,n-3)=\Int_{n-1,n}(3,n-3)$.
A permutation $\sigma$ in $\S_n$ is in $Int_{n,n-1}(3,n-3)$ if and
only it writes $\sigma=n\,x\,k\,w_1\,\ell\,w_2\,n-1$ where $w_1$ is
decreasing and $w_2$ is increasing, $1 \leq x<k$, $1\leq \ell\leq
k\leq n-2$ and all the letters of $w_1$ and $w_2$ are greater than
$\ell$. It is easily checked that for each $k$, there are
$(k-1)2^{k-3}$ such permutations. Thus,
$\Int_{n,n-1}(3,n-3)=\sum_{k=3}^{n-2}(k-1)2^{k-3}=(n-4)2^{n-4}$.

 (1b) Suppose
$2\leq a\leq n-2$. Clearly, if $\sig\in Int_{n,a}(3,n-3)$, then we
have $\sig_3=n-1$. We classify the permutations $\sigma$ in
$Int_{n,n-1}(3,n-3)$ according to the value of $\sig_2$. If $a<k\leq
n-2$, then there are $2^{a-2}$ choices for the permutations $\sig$
in $Int_{n,a}(3,n-3)$ with $\sig_2=k$. If $1\leq k<a$ and $a\geq 3$
(resp., $a=2$), then there are $2^{a-3}$ (resp., 0) choices.

(1c) Suppose $2\leq a\leq n-2$. We classify the permutations
$\sigma$ in $Int_{a,n}(3,n-3)$ according to the value of
$\sig_{n-1}$. If $\sig_{n-1}=n-1$, then there are
$\Int_{a,n-1}(3,n-3)$ such permutations. If $\sig_{n-1}\neq n-1$,
then $\sig_3=n-1$ and there are $a-1$ choices for $\sig_2$. For each
choice of $\sig_2$, there are $2^{n-5}$ corresponding permutations.

(2) It suffices to observe that $\{1\,2\,4\,3\,5\,6\,\cdots n\,,\,
1\,3\,4\,2\,5\,6\,\cdots n\}\subseteq Ini_{1,n}(3,n-3)$.
 \end{proof}

\noindent{\bf \emph{Proof of Proposition~\ref{prop:part6}.}} (1)
Suppose first $k\geq 2$. It follows from Lemmas~\ref{lem:parts>4-a}
and~\ref{lem:parts>4-b} that
\begin{itemize}
 \item for $n\geq 6$ and all $a,b\in[n]$,
we have $\Int_{a,b}(n)\leq \Int_{a,b}(3,n-3)$,  and $\Int_{2,n}(n)<
\Int_{2,n}(3,n-3)$,
 \item for $n\geq 5$ and all $b\in[n]$, $\Ini_{\cdot,b}(n)\leq \Ini_{\cdot,b}(3,n-3)$, and
$\Ini_{\cdot,n}(n)<\Ini_{\cdot,n}(3,n-3)$.
\end{itemize}
Consequently, by Proposition~\ref{prop:comparison2}, if $n\geq 5$, then for any nonempty compositions $\a$
and $\b$,  the compositions  $\a\oplus (n)\oplus
\b$ and $(n)\oplus \b$ are not maximal. In other words, if $\c$ is maximal then $c_i\leq 4$ for $i=1,\ldots,k-1$.
This, combined with Fact~\ref{fact:rev-compo}, implies that if $\c$ is maximal, $c_k\leq 3$.

(2) If $k=1$, then a simple counting (see the proof of the above
lemmas or~\cite{Billey}) shows that $\Pa(n)=2^{n-1}$ and
$\Pa(3,n-3)=(\binom{n-1}{2}-1)2^{n-2}$, from which we deduce that $\Pa(n)<\Pa(3,n-3)$ for
$n\geq 5$.
 \qed

\section{Some forbidden patterns in a maximal composition}

The $3$-factorization of a composition $\c$ is the (unique)
factorization of $\c$ as
$$
\c=\c^{(0)}\oplus(3)\oplus
\c^{(1)}\oplus\cdots\oplus(3)\oplus\c^{(k)}
$$
where $k$ is the number of parts in $\c$ equal to 3  and the
compositions $\c^{(i)}$ (possibly empty), called factors, have no part equal to 3. For instance,
$(4,4,3,2,4,2,3,3,2,1)=(4,4)\oplus(3)\oplus(2,4,2)\oplus(3)\oplus(3)\oplus(2,1)$.

The purpose of this section is to prove the following result.
\begin{prop}\label{prop:canform1}
Suppose $\c$ is a maximal composition with at least 3 parts. Then,
\begin{align}\label{eq:canform1}
 \c&=x_0\oplus(3)\oplus x_1\oplus\cdots\oplus(3)\oplus x_k
\end{align}
for some $k\geq 1$ and some sequence of compositions $(x_i)_i$ such
that\newline \centerline{$x_{0}\in\{\ep,(4)\}$, $x_{k}\in\{\ep\}\cup\{(2^s): s\geq1\}$,
 and, for $i=1,\ldots,k-1$, $x_{i}\in\{\ep\}\cup\{(2^s),(4^t): s,t\geq1\}$.}
\end{prop}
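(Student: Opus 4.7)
The plan is to combine Proposition~\ref{prop:part6} with a series of forbidden-pattern comparisons, each obtained by a single application of Proposition~\ref{prop:comparison2} to a suitable local factor of $\c$.

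By Proposition~\ref{prop:part6} together with Fact~\ref{fact:admissible}, if $\c=(c_1,\ldots,c_m)$ is maximal with $m\geq 3$ parts then $c_1,\ldots,c_{m-1}\in\{2,3,4\}$ and $c_m\in\{1,2,3\}$. Writing the $3$-factorization
\[
\c=x_0\oplus(3)\oplus x_1\oplus\cdots\oplus(3)\oplus x_k,
\]
the parts of $x_0,\ldots,x_{k-1}$ thus lie in $\{2,4\}$, while those of $x_k$ lie in $\{1,2,4\}$ with $1$ possible only as the last entry. The four structural claims I need to establish are: \textbf{(a)} $k\geq 1$; \textbf{(b)} $x_0\in\{\ep,(4)\}$; \textbf{(c)} each $x_i$ with $0<i<k$ is empty or equals $(2^s)$ or $(4^t)$; \textbf{(d)} $x_k$ is empty or equals $(2^s)$.

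For each bad subpattern I will exhibit an explicit replacement of equal total size and verify the hypothesis of Proposition~\ref{prop:comparison2}(1) (for interior substitutions, via $\Int_{a,b}$) or of Proposition~\ref{prop:comparison2}(2) (for initial substitutions, via $\Ini_{\cdot,b}$); end-substitutions will be routed through Fact~\ref{fact:rev-compo} by applying $r'$. Concretely, the moves I have in mind are: the length-preserving swaps $(2,4)\to(3,3)$ and $(4,2)\to(3,3)$ applied to any mixed adjacency inside an interior factor (yielding (c) and, a fortiori, (a)); $(2,3)\to(3,2)$ at the very front, ruling out $x_0=(2)$; $(4,4,3)\to(3,4,4)$ and $(4,2,3)\to(3,4,2)$ (again at the front) forcing $|x_0|\leq 1$; and corresponding end-moves, such as $(3,4)\to(4,3)$ at the back, whose treatment reduces to an already-handled front pattern via $r'\c=(c_m+1,c_{m-1},\ldots,c_2,c_1-1)$. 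The asymmetry between (b) and (d)---namely, that $(2^s)$ is permitted at the end but only $(4)$ at the start---is precisely mirrored by the asymmetry of $r'$: the last part of $r'\c$ is $c_1-1$, so a final $(2^s)$ in $\c$ corresponds to an initial $(4)$-then-$3$'s pattern in $r'\c$ rather than to another $(2^s)$.

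For each substitution $\c_{\mathrm{bad}}\to\c_{\mathrm{good}}$, I will compute the relevant quantities $\Int_{a,b}(1+\c_{\mathrm{bad}})$, $\Int_{a,b}(1+\c_{\mathrm{good}})$ (or $\Ini_{\cdot,b}$) directly from the peak condition, as in Lemmas~\ref{lem:parts>4-a} and~\ref{lem:parts>4-b}; in each case the two small compositions involved have length $\leq 5$ or $6$, so the counts reduce to short sums over the choice of the ``peak position'' in each factor and can be compared term by term. The principal obstacle is not any individual comparison but the bookkeeping needed to ensure that the list of forbidden patterns is \emph{exhaustive}---that a composition avoiding all of them lies in the family described by (a)--(d). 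The most delicate case is (a): when $\c$ consists only of $2$'s and $4$'s (and possibly a final $1$), one needs a replacement that introduces a $3$ while respecting admissibility, for which the interior moves $(2,4)\to(3,3)$ and $(4,2)\to(3,3)$ suffice, supplemented by one variable-length move to handle the single-value compositions $(2^m)$ and $(4^{m-1},c_m)$.
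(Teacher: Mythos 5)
Your overall strategy is the one the paper uses: confine the parts to $\{2,3,4\}$ (plus a possible final $1$) via Proposition~\ref{prop:part6}, and then eliminate every remaining bad configuration by exhibiting a same-size replacement and invoking Proposition~\ref{prop:comparison2}(1) for interior patterns, Proposition~\ref{prop:comparison2}(2) for initial ones, and Fact~\ref{fact:rev-compo} for final ones, with the small counts done by direct enumeration. This is exactly Lemma~\ref{lem:ForbiddenPatternsIni}, whose proof rests on the comparisons $\T(2,2)<\T(4)$, $\T(2,3)<\T(3,2)$, $\T(2,4)<\T(4,2)<\T(3,3)$, $\T(3,4)<\T(4,3)$, $\T(4,4)<\T(3,2,3)$ and the array $\Int_{a,b}(5,2)\leq\Int_{a,b}(4,3)$. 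So the architecture is right, and, as you anticipate, the issue is whether your particular catalogue of moves is exhaustive. It is not.

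Concretely: (i) you have no initial move for a composition beginning $(2,2,\ldots)$ or $(2,4,\ldots)$ --- the swaps $(2,4)\to(3,3)$ and $(4,2)\to(3,3)$ go through Proposition~\ref{prop:comparison2}(1), which requires a \emph{nonempty} prefix $\a$, and the only leading-$2$ initial move you list is $(2,3)\to(3,2)$; the same hole resurfaces at the other end, since ruling out a final $(2,1)$ or $(4,1)$ via $r'$ reduces precisely to the initial patterns $(2,2)\oplus\b$ and $(2,4)\oplus\b$. (ii) Your initial moves $(4,4,3)\to(3,4,4)$ and $(4,2,3)\to(3,4,2)$ presuppose that the third part is a $3$, i.e.\ $|x_0|=2$; they say nothing about $(4,2,2,3,\ldots)$ or $(4,4,4,3,\ldots)$, and neither of these contains any other pattern on your list --- note that an interior $(2,2)$ \emph{cannot} be forbidden, since $(3,2,2)$ is itself maximal. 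The paper avoids all of this by forbidding the two-part initial patterns $(4,2)\oplus\b$ and $(4,4)\oplus\b$ for arbitrary nonempty $\b$. (iii) Those two three-part comparisons are moreover unverified and not obviously true: the paper's own data $\T(3,4)<\T(4,3)$ shows that for two-part prefixes the larger part wants to come first, so there is no a priori reason to expect $\T(4,4,3)<\T(3,4,4)$; the paper instead compares $(4,4)$ with $(3,2,3)$. Finally, a small but relevant slip in your $r'$ bookkeeping: the first part of $r'\c$ is $c_m+1$, so a final $2$ becomes an initial $3$ and a final $3$ becomes an initial $4$; your statement that a final $(2^s)$ corresponds to an initial $(4)$-then-$3$'s pattern is backwards, and getting this right is needed to see why $(2^s)$ is admissible at the end while only $(4)$ survives at the front.
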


Clearly, in view of Proposition~\ref{prop:part6} and
Fact~\ref{fact:admissible}, the above result is immediate from the
following lemma.

\begin{lem}\label{lem:ForbiddenPatternsIni}
For any nonempty compositions $\a$ and $\b$, 
\begin{enumerate}
 \item if $\c\in\{(2,2),(2,3),(2,4)\}$, the composition
 $\c\oplus\b$ is not maximal;
 \item if $\c\in\{(2,1),(3,1),(4,1)\}$,
 the composition $\a\oplus\c$ is not maximal;
 \item if $\c\in\{(2,4),(4,2)\}$,
 the compositions $\c\oplus\b$, $\a\oplus\c$ and $\a\oplus\c\oplus\b$ are not maximal;
  \item the composition $(4,4)\oplus\b$ is not maximal.
\end{enumerate}
\end{lem}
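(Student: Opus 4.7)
My plan is to apply Proposition~\ref{prop:comparison2} case-by-case to each forbidden pattern, using Fact~\ref{fact:rev-compo} (the reversal $r'$) to halve the work. The first observation is that parts (1) and (2) are dual under $r'$: for any nonempty $\b=(b_1,\ldots,b_l)$ and $c\in\{2,3,4\}$,
\[
r'\bigl((2,c)\oplus\b\bigr) = (b_l+1,b_{l-1},\ldots,b_1,c,1) = \a'\oplus(c,1),
\]
with $\a'$ nonempty. Hence $(2,c)\oplus\b$ is maximal if and only if $\a'\oplus(c,1)$ is. I would therefore focus on part (2); part (1) then follows from Fact~\ref{fact:rev-compo}.

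For part (2), that is, $\a\oplus(c,1)$ is not maximal for $c\in\{2,3,4\}$ and $\a$ nonempty, the plan is to apply Proposition~\ref{prop:comparison2}(1) after a decomposition $\a\oplus(c,1)=\a'\oplus\c_0\oplus(1)$, where $\c_0$ absorbs one or more trailing parts of $\a$ together with the $(c,1)$ at the end; with $\b=(1)$ nonempty, the proposition applies provided $\a'$ is nonempty. For each specific $\c_0$ of the form $(a_r,c)$ or $(a_{r-1},a_r,c)$, etc., I would exhibit a same-size replacement $\c'$ for which $\Int_{u,v}(1+\c_0)\leq\Int_{u,v}(1+\c')$ holds for all $(u,v)$, with some strict inequality; natural candidates are $(3)$-rich compositions of the same size whose $\Int$-statistics have convenient closed forms. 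The required enumerations parallel those carried out for Lemma~\ref{lem:parts>4-b}. The bounded-size boundary cases ($\a$ consisting of a single part, with $a_1\leq 4$ by Proposition~\ref{prop:part6}, whence $n\leq 9$) are dispatched by direct enumeration against the explicit maximal compositions of the same total size.

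Parts (3) and (4) follow the same scheme with different replacements: $(4,2)$ and $(2,4)$ of size $6$ are compared with the maximal composition $(3,3)$, and $(4,4)$ of size $8$ is compared with a $(3)$-rich composition such as $(3,3,2)$. For the two-sided statements in part (3), Proposition~\ref{prop:comparison2}(1) applies via the $\Int$-inequality for $1+(4,2)=(5,2)$ versus $1+(3,3)=(4,3)$; for the one-sided statements I use Proposition~\ref{prop:comparison2}(2) together with reversal.

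The principal obstacle is the case $c=2$ of part (2), equivalently $(2,2)$ of part (1). Here the obvious replacement candidates fail: Proposition~\ref{prop:comparison2}(2) is not directly applicable with $\c'$ of size $4$, since the only admissible options $(4)$ and $(2,2)$ do not dominate $(2,2)$ in $\Ini$; and even after the decomposition $\a'\oplus(a_r,2)\oplus(1)$ with $a_r=2$, the chunk $(2,2)$ of size $4$ cannot be improved locally. This forces us to absorb more parts of $\a$ (e.g.\ taking $\c_0=(a_{r-1},2,2)$ of size $a_{r-1}+4$) and use a different replacement $\c'$ for each value of $a_{r-1}$. Organising the resulting case analysis and verifying the $\Int$-inequalities uniformly is the main technical step of the proof.
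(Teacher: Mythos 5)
Your overall strategy (pair parts (1) and (2) via $r'$, then compare each forbidden pattern against a same-size replacement using Proposition~\ref{prop:comparison2}) is the right one, and your treatment of the two-sided case in (3), comparing $\Int_{a,b}(5,2)$ with $\Int_{a,b}(4,3)$ and invoking Proposition~\ref{prop:comparison2}(1), is exactly what the paper does. But the proposal contains a concrete error at what you yourself single out as the principal obstacle. You assert that for the pattern $(2,2)$ ``the only admissible options $(4)$ and $(2,2)$ do not dominate $(2,2)$ in $\Ini$''. This is false: a direct count gives $\big(\Ini_{\cdot,b}(2,2)\big)_{1\leq b\leq 4}=[0,1,2,2]$ and $\big(\Ini_{\cdot,b}(4)\big)_{1\leq b\leq 4}=[0,1,2,4]$, so $(4)$ dominates $(2,2)$ componentwise with a strict inequality at $b=4$, and Proposition~\ref{prop:comparison2}(2) immediately shows that $(2,2)\oplus\b$ is not maximal for any $\b\neq\ep$ --- this is precisely the paper's one-line argument, and by $r'$ it settles $\a\oplus(2,1)$. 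Because of this miscount you replace a one-line step by an open-ended scheme (absorbing a further trailing part $a_{r-1}$ of $\a$ and choosing ``a different replacement $\c'$ for each value of $a_{r-1}$''), which is an unbounded case analysis that you neither carry out nor show to be feasible. As written, that case of the lemma is not proved.

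A secondary remark on the architecture: the paper runs the reversal reduction in the opposite direction from yours. It proves the prefix statements ($(2,c)\oplus\b$ for $c=2,3,4$, $(4,2)\oplus\b$, $(3,4)\oplus\b$, $(4,4)\oplus\b$ not maximal) directly from the componentwise inequalities $\T(2,2)<\T(4)$, $\T(2,3)<\T(3,2)$, $\T(2,4)<\T(4,2)<\T(3,3)$, $\T(3,4)<\T(4,3)$, $\T(4,4)<\T(3,2,3)$ via Proposition~\ref{prop:comparison2}(2), which only requires $\b\neq\ep$, and then obtains the suffix statements of part (2) and the statement for $\a\oplus(4,2)$ by reversal ($\a\oplus(2,4)$ is already non-maximal by Proposition~\ref{prop:part6}, its last part being $4$). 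Your plan of attacking part (2) first through Proposition~\ref{prop:comparison2}(1) with $\b=(1)$ forces $\a'\neq\ep$ and hence the boundary cases you mention; these evaporate entirely if you work with prefixes and part (2) of the comparison proposition. Finally, your candidate $(3,3,2)$ for dominating $(4,4)$ is left unverified; the paper's verified choice is $(3,2,3)$.
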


\begin{proof} For simplicity, if $\c$ is a composition of $n$,
the sequence $\big(\Ini_{\cdot,b}(\c)\big)_{1\leq b\leq n}$ will be denoted by $\T\,\c$.
Using a computer algebra system, it is easy to obtain
\begin{align*}
\T(2,2)&=[0,1,2,2],\quad \T(4)=[0,1,2,4], \quad
\T(2,3)=[0,2,4,6,8],\quad \T(3, 2)=[0,3,6,8,8],\\
\T(2,4)&=[0,3,6,10,16,24],\quad \T(4,2) =[0,7,14,20,24,24],\quad
\T(3,3)=[0,8,16,24,32,40],\\
\T(3,4)&= [0,15,30,48,72,104,144],\quad
\T(4,3)= [0,24,48,72,96,120,144],\\
\T(4,4)&=[0,55,110,172,248,344,464,608],\quad
\T(3,2,3)=[0,96,192,288,384,480,576,672].
\end{align*}
from which it results that
\begin{align*}
 \T(2,2)<\T(4),\;\;
 \T(2,3)<\T(3,2),\;\;
 \T(2,4)<\T(4,2)<\T(3,3),\;\;
 \T(3,4)<\T(4,3),\;\;
 \T(4,4)<\T(3,2,3).
\end{align*}
By Proposition~\ref{prop:comparison2}(2), these inequalities imply
that if $\c\in\{(2,2),(2,3),(2,4),(4,2),(3,4),(4,4)\}$, the composition $\c\oplus\b$ is not maximal
for any $\b\neq\ep$. By Fact~\ref{fact:rev-compo}, this can be reformulated as follows: if $\c\in\{(2,1),(3,1),(4,1),(2,3),(4,2),(4,3)\}$,
the composition $\a\oplus\c$  is not maximal for any $\a\neq\ep$.

Thus, to conclude our proof, it just remains to check that if $\c\in\{(2,4),(4,2)\}$,
 the composition $\a\oplus\c\oplus\b$ is not maximal for any $\a\neq\ep$ and any $\b\neq\ep$.
\begin{table}[h!]
\begin{center}\begin{tabular}{c|rrrrrr}
$a\setminus b$&2&3&4&5&6&7\\
\hline\\[-0.3cm]
 2& [ 0, 0]& [0, 0]& [ 1, 2]& [ 2, 4]&
[3, 6]&  [3, 8]\\
 3& [ 0, 0]& [ 0, 0]& [ 2, 4]& [ 4, 8]& [6, 12]&  [ 6, 16]\\
 4& [ 1, 2]& [ 2, 4]& [0, 0]& [ 6,12]&
  [ 10, 18]& [ 10, 24]\\
 5& [ 2, 4]& [ 4, 8]& [ 6, 12]& [ 0, 0]&
  [ 16, 24]& [ 16, 32]\\
 6& [ 4, 6]& [ 8, 12]& [ 12, 18]& [ 16,24]&  [ 0, 0]& [ 24, 40]\\
 7& [ 7, 8]& [ 14, 16]& [ 20, 24]& [ 24,32]&  [ 24, 40]& [ 0, 0]\\
\end{tabular}
\end{center}
\caption{The values
$[\Int_{a,b}(5,2),\Int_{a,b}(4,3)]$}
\label{tab:42vs33}
\end{table} 
Again, by using a computer algebra system, we can explicitly compute the array
$\big([\Int_{a,b}(5,2),Int_{a,b}(4,3)]\big)_{1\leq a,b\leq 7}$ (see Table~\ref{tab:42vs33})
from which it results that for all integers $a,b$ we have
$\Int_{a,b}(5,2)\leq \Int_{a,b}(4,3)$ and (at least) one of these inequalities is strict.
By Proposition~\ref{prop:comparison2}(1), this implies that $\a\oplus(4,2)\oplus\b$ is not maximal
for any $\a\neq\ep$ and any $\b\neq\ep$. By Fact~\ref{fact:rev-compo}, the same is true with $(4,2)$ replaced by $(2,4)$.
 \end{proof}

Before we end this section, we provide further patterns
 that a maximal composition must avoid.
 \begin{lem}\label{lem:ForbiddenPatternsIni2}
For any nonempty compositions $\a$ and $\b$, 
\begin{enumerate}
 \item if $\c\in\{(4,3,2),(4,3,4),(3,2,3,2),(3,3,2,3,2)\}$, the composition
 $\c\oplus\b$ is not maximal;
 \item if $\c\in\{(2,3,3),(4,3,3),(2,3,2,2),(2,3,2,3,2)\}$,
 the composition $\a\oplus\c$ is not maximal.
 \end{enumerate}
\end{lem}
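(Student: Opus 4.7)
The plan is to prove (1) directly via Proposition~\ref{prop:comparison2}(2) and to deduce (2) from it using Fact~\ref{fact:rev-compo}. For the deduction, I use the identity
\[
r'(\c\oplus\b)=(b_m+1,\, b_{m-1},\ldots, b_1,\, c_k,\, c_{k-1},\ldots, c_2,\, c_1-1),
\]
which shows that the substitution $(c_1,\ldots,c_k)\mapsto(c_k,c_{k-1},\ldots,c_2,c_1-1)$ sends the four patterns of (1) exactly to the four patterns of (2):
\[
(4,3,2)\leftrightarrow(2,3,3),\ \ (4,3,4)\leftrightarrow(4,3,3),\ \ (3,2,3,2)\leftrightarrow(2,3,2,2),\ \ (3,3,2,3,2)\leftrightarrow(2,3,2,3,2).
\]
As $\b$ ranges over admissible nonempty compositions for which $\c\oplus\b$ is admissible, the reversed prefix $(b_m+1,\ldots,b_1)$ ranges over all compositions with every part $\geq 2$, which is precisely the admissibility condition for the nonempty prefixes $\a$ in (2). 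Hence (1) and (2) are equivalent by Fact~\ref{fact:rev-compo}.

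For (1), I would apply Proposition~\ref{prop:comparison2}(2) to each $\c$ in the list, with a companion composition $\c'$ of the same total size for which $\T\c<\T\c'$ (componentwise $\leq$, with at least one strict entry, in the sense introduced before Proposition~\ref{prop:comparison2}). Candidates suggested by Theorem~\ref{thm:main} and by the methodology of Lemma~\ref{lem:ForbiddenPatternsIni} are
\[
(4,3,2)\to(3,3,3),\quad (4,3,4)\to(3,3,3,2),\quad (3,2,3,2)\to(3,3,2,2),\quad (3,3,2,3,2)\to(3,3,3,2,2).
\]
Once $\T\c<\T\c'$ is verified, Proposition~\ref{prop:comparison2}(2) immediately yields $\Pa(\c\oplus\b)<\Pa(\c'\oplus\b)$ for every admissible nonempty $\b$, so that $\c\oplus\b$ is not maximal.

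The remaining task is a finite verification: tabulate $\Ini_{\cdot,b}(\c)$ and $\Ini_{\cdot,b}(\c')$ for $1\leq b\leq |\c|$ and check componentwise dominance. This is best done with a computer algebra system, exactly as in the proof of Lemma~\ref{lem:ForbiddenPatternsIni}. The main obstacle I foresee is that three of the four comparisons pair $\c$ with a $\c'$ that is already maximal of the common size ($(4,3,2)$ versus $(3,3,3)$ at $n=9$; $(3,2,3,2)$ versus $(3,3,2,2)$ at $n=10$; and the pair at $n=13$ among the three maximal compositions listed in Theorem~\ref{thm:main}). In each of these borderline cases one has $\Pa(\c)=\Pa(\c')$, so the strict $\T$-dominance cannot come from a difference of totals and must instead exploit an asymmetry between permutations of the two classes that end in an ascent versus a descent; if the first candidate fails to strictly dominate, a different same-size competitor has to be tried, but no new conceptual ingredient beyond Proposition~\ref{prop:comparison2}(2) and Fact~\ref{fact:rev-compo} is required.
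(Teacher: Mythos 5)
Your strategy coincides with the paper's: reduce assertion (2) to assertion (1) via Fact~\ref{fact:rev-compo} (your explicit correspondence $(4,3,2)\leftrightarrow(2,3,3)$, $(4,3,4)\leftrightarrow(4,3,3)$, $(3,2,3,2)\leftrightarrow(2,3,2,2)$, $(3,3,2,3,2)\leftrightarrow(2,3,2,3,2)$ is exactly right, and so is the bijection between the admissible suffixes $\b$ and the admissible prefixes $\a$), and then dispose of (1) by exhibiting, for each listed $\c$, a same-size $\c'$ with $\big(\Ini_{\cdot,b}(\c)\big)_b<\big(\Ini_{\cdot,b}(\c')\big)_b$ and invoking Proposition~\ref{prop:comparison2}(2). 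What is missing is the substance: those four componentwise inequalities are the entire content of the proof, and you only promise to check them, while proposing companions that differ from the paper's in three of the four cases and are not verified. The paper's verified choices are
\[
(4,3,2)\to(3,3,3),\qquad (4,3,4)\to(3,2,3,3),\qquad (3,2,3,2)\to(4,3,3),\qquad (3,3,2,3,2)\to(4,3,3,3).
\]
In the two cases where you worry about comparing two equinumerous maximal compositions, the paper sidesteps the difficulty by taking companions $(4,3,3)$ and $(4,3,3,3)$ that are \emph{not} maximal --- by Corollary~\ref{cor:ExactPc} one even has $\Pa(4,3,3)<\Pa(3,2,3,2)$ and $\Pa(4,3,3,3)<\Pa(3,3,2,3,2)$ --- and which nonetheless strictly dominate in the $\T$-order, precisely because $\T$ records only the permutations ending with an ascent. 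The one maximal-versus-maximal comparison the paper does make, $\T(4,3,2)<\T(3,3,3)$, succeeds for the reason you anticipate: the two classes have equal total cardinality but different numbers of ascent-ending permutations ($15120$ versus $20160$). So your method and your reduction are correct, but until the tables are actually computed (for your candidates or for the paper's) the lemma is not proved; and if any of your three alternative candidates fails the componentwise test, the paper's list above shows which substitutes to use.
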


\begin{proof}
By Fact~\ref{fact:rev-compo}, the two assertions are equivalent. So it suffices to prove (1).
Using a computer algebra system, we obtain
\begin{align*}
\T(4,3,2)&= [0, 504, 1008, 1488, 1920, 2280, 2544, 2688, 2688],\\
\T(3,3,3)&= [0, 560, 1120, 1680, 2240, 2800, 3360, 3920, 4480],\\
\T(4,3,4)&= [0, 9072,18144, 27720, 38304, 50376, 64368,80640,99456,120960, 145152],\\
\T(3,2,3,3)&=[0,16128,32256,48384,64512,80640,96768,112896,129024,145152,161280],\\
\T(3,2,3,2)&=[0,2688,5376,7968,10368,12480,14208,15456,16128,16128],\\
\T(4,3,3)&=[0,2688,5376,8064,10752,13440,16128,18816,21504,24192],\\
\T(3,3,2,3,2)&= [0, 887040, 1774080, 2644992, 3483648, 4273920,
4999680, 5644800, 6193152, \\&\quad 6628608, 6935040, 7096320, 7096320],\\
\T(4,3,3,3)&= [0,887040,1774080,2661120,3548160,4435200,
5322240,6209280,7096320,\\&\quad 7983360,8870400,9757440,10644480],
\end{align*}
from which it results that
\begin{align*}
 T(4,3,2)&<T(3,3,3),&
 T(4,3,4)&<T(3,2,3,3),&
 T(3,2,3,2)&<T(4,3,3),&
 T(3,3,2,3,2)&<T(4,3,3,3).
\end{align*}
This, by Proposition~\ref{prop:comparison2}(2), gives the first assertion.
\end{proof}

We can even go further with the same method but it will be more convenient
and even more simpler to use the tools  presented in  the next section.

\section{A counting lemma}

 The purpose of this section is to provide an efficient counting formula for the number $\Pa(\c)$
when $\c$ is a composition in which the factors of its 3-factorization have a small size. In the sequel, 
we let $|\c|$ denote the size of the composition $\c$. The size of the empty composition is given by $|\ep|=0$.
The following is the key result in this section.

\begin{lem}\label{lem:separation}
Let $\a$ and $\b$ two compositions. If $\b\neq \ep$, then we have
\begin{align}\label{eq:separation}
\Pa(\a\oplus(3)\oplus\b)&=\binom{|\a|+|\b|+3}{|\a|+1}\Pa(\a\oplus(1))\Pa((2)\oplus\b).
\end{align}
\end{lem}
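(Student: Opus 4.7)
The plan is to prove the identity by constructing an explicit bijection. Set $n=|\a|+|\b|+3$ and $m=|\a|+1$. I would establish a bijection
\[
\Phi:\P(\a\oplus(3)\oplus\b)\longrightarrow \bigl\{(A,\tau,\rho)\ :\ A\subseteq[n],\ |A|=m,\ \tau\in\P(\a\oplus(1)),\ \rho\in\P((2)\oplus\b)\bigr\}
\]
by splitting each permutation at position $m$; counting the right-hand side then yields exactly $\binom{n}{m}\Pa(\a\oplus(1))\Pa((2)\oplus\b)$, which is the desired quantity.

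For the forward map, given $\sigma\in\P(\a\oplus(3)\oplus\b)$, I would set $A=\{\sigma_1,\ldots,\sigma_m\}$, let $\tau\in\S_m$ be the permutation order-isomorphic to $\sigma_1\cdots\sigma_m$, and let $\rho\in\S_{|\b|+2}$ be the permutation order-isomorphic to $\sigma_{m+1}\cdots\sigma_n$. Since order-isomorphism preserves the descent pattern, the peaks of $\tau$ are exactly the peaks of $\sigma$ lying in $\{2,\ldots,m-1\}$, which by the form of $\pc(\sigma)$ are the peak positions contributed by $\a$, the last one at $|\a|$; since the tail of $\tau$ after this peak has length $m-|\a|=1$, we obtain $\pc(\tau)=\a\oplus(1)$. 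Similarly, the peaks of $\rho$ correspond to the peaks of $\sigma$ in $\{m+1,\ldots,n-1\}$ shifted by $-m$, the first of which appears at relative position $(|\a|+3)-m=2$, giving $\pc(\rho)=(2)\oplus\b$.

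The inverse $\Psi$ concatenates: given $(A,\tau,\rho)$, form $\sigma\in\S_n$ whose entries in positions $1,\ldots,m$ are the image of $\tau$ under the order-preserving bijection $[m]\to A$, and whose entries in positions $m+1,\ldots,n$ are the image of $\rho$ under the order-preserving bijection $[|\b|+2]\to[n]\setminus A$. The main obstacle is verifying $\pc(\sigma)=\a\oplus(3)\oplus\b$: the peaks at positions $\leq|\a|$ and $\geq|\a|+3$ are inherited directly from $\tau$ and from $\rho$, so the nontrivial task is to inspect the three boundary positions $|\a|+1,|\a|+2,|\a|+3$. The shapes of the two end-compositions force the right behavior. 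Since $\pc(\tau)=\a\oplus(1)$, position $m=|\a|+1$ is not a peak of $\tau$, and moreover $\tau_m<\tau_{m-1}$ whenever $m\geq 2$ (because the last peak of $\tau$ sits at $m-1$); this prevents $|\a|+1$ from being a peak of $\sigma$. Since $\pc(\rho)$ begins with $2$, we have $\sigma_{|\a|+2}<\sigma_{|\a|+3}>\sigma_{|\a|+4}$, which simultaneously forbids a peak at $|\a|+2$ and produces the required peak at $|\a|+3$. The hypothesis $\b\neq\ep$ is essential here, as it guarantees $|\b|+2\geq 3$ so that the entry $\sigma_{|\a|+4}$ exists. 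Since $\Phi$ and $\Psi$ are mutually inverse by construction, the bijection is established and \eqref{eq:separation} follows.
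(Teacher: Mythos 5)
Your proof is correct and follows essentially the same route as the paper: both establish the identity via the concatenation/splitting bijection at position $|\a|+1$, with the right-hand side counted as (choice of value set for the first block) $\times$ (pattern of the first block) $\times$ (pattern of the second block), and both verify the peak composition by inspecting the three boundary positions. Your encoding as a triple $(A,\tau,\rho)$ is just a reparametrization of the paper's pairs of permutations on complementary subsets.
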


\begin{proof}  Given a composition $\c$ of a positive integer $n$ 
and a finite subset $F\subset\mathbb{P}$ with cardinality $\# F=n$, 
we let $\P(\c\,;F)$ be the set of permutations of the set $F$ having peak set equal to $F$.
With this terminology, we have $\P(\c)=\P(\c\,;[n])$. 

Let $r$ and $s$ be the sizes of the compositions $\a$ and $\b$,
and let
$$R_{\a,\b}=\{(\gamma,\beta)\,:\gamma\in\P(\a\oplus(1)\,;S),\;
\beta\in\P((2)\oplus\b\,;T),\;S\cup T=\{1,\ldots,r+s+3\}\}.$$ 
Note that $\#R_{\a,\b}$ is equal to the right-hand side of~\eqref{eq:separation}. 
 So, to prove~\eqref{eq:separation}, it suffices to present a bijection between $R_{\a,\b}$ 
 and $\P(\a\oplus(3)\oplus\b)$. This is quite easy. 
 
 Consider the concatenation function
$\Gamma:R_{\a,\b}\mapsto\S_{r+s+3}$ defined by
$\Gamma(\gamma,\beta)=\gamma\beta$. It is clear that $\Gamma$ is
well defined, injective and $\Gamma(R_{\a,\b})\supseteq
\P(\a\oplus(3)\oplus\b)$. We now check that
$\Gamma(R_{\a,\b})\subseteq \P(\a\oplus(3)\oplus\b)$. Suppose
$\sig=\gamma\beta$ with $(\gamma\beta)\in R_{\a,\b}$. To prove that
$\sig=\gamma\beta$ is in $\P(\a\oplus(3)\oplus\b)$, it suffices to
prove that $r+1$ and $r+2$ are not peaks of $\sig$ which is immediate
since $\sig_{r}=\gamma_r>\gamma_{r+1}=\sig_{r+1}$ ($r$ is a peak of
$\gamma$) and $\sig_{r+2}=\beta_{1}<\beta_{2}=\sig_{r+3}$ ($2$ is a
peak of~$\beta$).
 \end{proof}

Repeated applications of Lemma~\ref{lem:separation}  leads to
the following result the proof of which is omitted.

\begin{prop}\label{prop:separation}
 Let $\big(\c^{(i)}\big)_{0\leq i\leq k}$ be a sequence of compositions
such that $\c^{(k)}\neq \ep$. Then,  we have
\begin{align}\label{eq:counting}
\Pa(\c^{(0)}\oplus(3)\oplus\c^{(1)}\oplus(3)\cdots\oplus\c^{(k)})
&=\binom{\sum_{i=0}^{k}|\c^{(i)}|\,+3k}{\c^{(0)}+1,\,3+|\c^{(1)}|,\ldots,3+|\c^{(k-1)}|,\,\c^{(k)}+2}\\
&\quad\times \Pa(\c^{(0)}\oplus(1))\left(\prod_{i=1}^{k-1}
\Pa((2)\oplus\c^{(i)}\oplus(1))\right)\Pa((2)\oplus\c^{(k)}).\nonumber
\end{align}
\end{prop}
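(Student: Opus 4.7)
The plan is a straightforward induction on $k$, peeling off the leftmost $(3)$ via Lemma~\ref{lem:separation} at each step. The base case $k=1$ is exactly Lemma~\ref{lem:separation} with $\a=\c^{(0)}$ and $\b=\c^{(1)}$: the product $\prod_{i=1}^{k-1}$ in~\eqref{eq:counting} is empty, and the multinomial coefficient collapses to $\binom{|\c^{(0)}|+|\c^{(1)}|+3}{|\c^{(0)}|+1}$, matching Lemma~\ref{lem:separation} verbatim.

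For the inductive step, write the composition as $\c^{(0)}\oplus(3)\oplus\b$ with $\b=\c^{(1)}\oplus(3)\oplus\cdots\oplus\c^{(k)}$; this is nonempty because $\c^{(k)}\neq\ep$, so Lemma~\ref{lem:separation} applies and yields
\begin{equation*}
\Pa(\c^{(0)}\oplus(3)\oplus\b)=\binom{|\c^{(0)}|+|\b|+3}{|\c^{(0)}|+1}\,\Pa(\c^{(0)}\oplus(1))\,\Pa((2)\oplus\b).
\end{equation*}
Now re-bracket $(2)\oplus\b=\widetilde{\c}^{(0)}\oplus(3)\oplus\c^{(2)}\oplus(3)\oplus\cdots\oplus\c^{(k)}$ with $\widetilde{\c}^{(0)}:=(2)\oplus\c^{(1)}$, and apply the inductive hypothesis to this composition (which contains $k-1$ marked $(3)$'s and still ends in $\c^{(k)}\neq\ep$). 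This produces the factor $\Pa(\widetilde{\c}^{(0)}\oplus(1))=\Pa((2)\oplus\c^{(1)}\oplus(1))$, which combines with $\prod_{i=2}^{k-1}\Pa((2)\oplus\c^{(i)}\oplus(1))$ and $\Pa((2)\oplus\c^{(k)})$ to give exactly the product displayed in~\eqref{eq:counting}.

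All that is left is to check that the coefficients multiply correctly. Setting $N=\sum_{i=0}^k|\c^{(i)}|+3k$ and $M=N-|\c^{(0)}|-1$, the inductive coefficient is
\begin{equation*}
\binom{M}{|\widetilde{\c}^{(0)}|+1,\,3+|\c^{(2)}|,\ldots,3+|\c^{(k-1)}|,\,|\c^{(k)}|+2},
\end{equation*}
and since $|\widetilde{\c}^{(0)}|+1=|\c^{(1)}|+3$, the standard absorption identity $\binom{N}{a}\binom{N-a}{b_1,\ldots,b_m}=\binom{N}{a,b_1,\ldots,b_m}$ merges this with $\binom{N}{|\c^{(0)}|+1}$ to produce precisely the multinomial on the right-hand side of~\eqref{eq:counting}. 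There is no real obstacle here beyond bookkeeping: one must verify that $N$ is the total size of the composition, that the telescoping sum $N-(|\c^{(0)}|+1)-\sum_{i=1}^{k-1}(|\c^{(i)}|+3)$ indeed equals $|\c^{(k)}|+2$ (so the last entry of the multinomial is correct), and that the nonemptiness hypothesis of Lemma~\ref{lem:separation} is preserved at each stage of the recursion, which follows from $\c^{(k)}\neq\ep$.
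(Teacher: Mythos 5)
Your proposal is correct and is precisely the argument the paper has in mind: the paper states that the proposition follows by "repeated applications of Lemma~\ref{lem:separation}" and omits the details, and your induction on $k$ (peeling off the leftmost $(3)$, absorbing $(2)$ into $\c^{(1)}$, and merging the binomial coefficients via the multinomial absorption identity) supplies exactly those details with the bookkeeping checked correctly.
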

In the sequel, we will use the  following formulas:
\begin{align}\label{eq:ExactPc}
&\Pa(2,1)=2,\quad \Pa(3,1)=8,\quad \Pa(4,1)=24,\quad \Pa(2,2,1)=16,\nonumber\\
&\Pa(2,2)=8,\quad\Pa(2,3)=24,\quad\Pa(2,2,2)=96,\quad \Pa(2,4)=64,\\
&\Pa(2,2,1)=16,\quad \Pa(2,2,2,1)=272,\quad\Pa(2,4,1)=288.\nonumber
\end{align}

The following non maximality criterion is immediate from the above result.
  \begin{prop}\label{prop:comparison-factors}
Suppose $\c=\c^{(0)}\oplus(3)\oplus\c^{(1)}\oplus(3)\cdots\oplus\c^{(k)}$
with $k\geq 1$ and $\c^{(k)}\neq\ep$. If there exists a composition $\b$ such that
\begin{enumerate}
 \item $|\b|=|\c^{(i)}|$ for some index $i$, $1\leq i\leq k-1$, and $\Pa((2)\oplus\c^{(i)}\oplus(1))<\Pa((2)\oplus\b\oplus(1))$, 
 then $\c$ is not maximal;
 \item $|\b|=|\c^{(k)}|$  and $\Pa((2)\oplus\c^{(k)})<\Pa((2)\oplus\b)$, 
 then $\c$ is not maximal.
\end{enumerate}
\end{prop}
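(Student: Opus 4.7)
The plan is to derive both conclusions as a direct bookkeeping consequence of the multiplicative formula~\eqref{eq:counting} in Proposition~\ref{prop:separation}. Given the hypothesis on $\c$, I will form a new composition $\c'$ by replacing the single factor $\c^{(i)}$ (for part (1)) or the tail $\c^{(k)}$ (for part (2)) by $\b$, and leaving all other factors and all the separating parts equal to $3$ untouched. Since $|\b|=|\c^{(i)}|$ or $|\b|=|\c^{(k)}|$, the composition $\c'$ has the same size as $\c$, so it is a legitimate competitor in the maximality comparison.

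Applying Proposition~\ref{prop:separation} to both $\c$ and $\c'$, I observe that the two resulting products differ in at most one factor. The multinomial coefficient in~\eqref{eq:counting} depends only on the sizes $|\c^{(0)}|,\ldots,|\c^{(k)}|$, and these are preserved by the substitution. The leading factor $\Pa(\c^{(0)}\oplus(1))$ is untouched in both cases, and in case (1) so is the trailing factor $\Pa((2)\oplus\c^{(k)})$ as well as every intermediate factor $\Pa((2)\oplus\c^{(j)}\oplus(1))$ with $j\neq i$. Only the factor $\Pa((2)\oplus\c^{(i)}\oplus(1))$ is replaced by $\Pa((2)\oplus\b\oplus(1))$, so the strict inequality in the hypothesis yields $\Pa(\c)<\Pa(\c')$, showing that $\c$ is not maximal. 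Case (2) is handled identically, the only change being that the trailing factor $\Pa((2)\oplus\c^{(k)})$ is now the one that is substituted.

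There is essentially no obstacle; the only small point to verify is that the hypotheses of Proposition~\ref{prop:separation} are met for $\c'$. This amounts to checking that the tail factor of $\c'$ is nonempty: in case (1) it is still $\c^{(k)}$, which is nonempty by assumption, and in case (2) it is $\b$, which is nonempty because $|\b|=|\c^{(k)}|>0$. Consequently the formula~\eqref{eq:counting} is valid for $\c'$, and the argument above goes through.
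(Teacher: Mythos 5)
Your proof is correct and is exactly the argument the paper intends: the paper gives no written proof, stating only that the proposition is ``immediate from the above result'' (Proposition~\ref{prop:separation}), and your substitution of the single factor together with the observation that the multinomial coefficient depends only on the sizes $|\c^{(i)}|$ is precisely that immediate deduction. The check that the tail factor of $\c'$ remains nonempty is the right (and only) detail to verify.
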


\begin{cor}\label{cor:canform2}
Suppose $\c$ is a maximal composition with at least 3 parts and with 3-factorization
\begin{align}\label{eq:canform}
 \c&=x_0\oplus(3)\oplus x_1\oplus\cdots\oplus(3)\oplus x_k.
\end{align}
Then, $k\geq 1$ and the sequence of compositions $(x_i)_i$ satisfies\newline
\centerline{$x_{0}\in\{\ep,(4)\}$, $x_{k}\in\{\ep,(2),(2,2)\}$, and $x_{i}\in\{\ep,(2),(4)\}$ for $i=1,\ldots,k-1$.}
\end{cor}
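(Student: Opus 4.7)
The plan is to use Proposition~\ref{prop:canform1} (which already provides $k\geq 1$ and the coarse shape of the factors) and then refine the conclusion via Proposition~\ref{prop:comparison-factors}. By Proposition~\ref{prop:canform1}, the only cases left to eliminate for a maximal $\c$ with at least three parts are (i)~inner $x_i=(2^s)$ with $s\geq 2$; (ii)~inner $x_i=(4^t)$ with $t\geq 2$; and (iii)~final $x_k=(2^s)$ with $s\geq 3$. For each, I would exhibit a composition $\b$ of the same size with all parts $\geq 2$ (so that admissibility is preserved after concatenation with $(2)$ and $(1)$) and verify the strict inequality required by Proposition~\ref{prop:comparison-factors}(1) or~(2).

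The two smallest instances follow directly from \eqref{eq:ExactPc}. For inner $x_i=(2,2)$ take $\b=(4)$: $\Pa(2,2,2,1)=272<288=\Pa(2,4,1)$. For $x_k=(2,2,2)$ take $\b=(3,3)$: Lemma~\ref{lem:separation} with $\a=(2),\b=(3)$ gives $\Pa(2,3,3)=\binom{8}{3}\Pa(2,1)\Pa(2,3)=2688$, whereas a direct count of the permutations of $[8]$ satisfying $\sig_1<\sig_2>\sig_3<\sig_4>\sig_5<\sig_6>\sig_7$ with $\sig_8$ free gives $\Pa(2,2,2,2)=8\cdot 272=2176$ (any seven-element set admits $272$ alternating permutations of the specified shape, and $\sig_8$ may be chosen in $8$ ways). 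Since $2176<2688$, the case $x_k=(2,2,2)$ is ruled out.

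For the larger families I would take $\b=(3,3)\oplus(2^{s-3})$ for inner $x_i=(2^s)$ with $s\geq 3$, $\b=(2^{s-3})\oplus(3,3)$ for $x_k=(2^s)$ with $s\geq 4$, and $\b=(3,2,3)\oplus(4^{t-2})$ for inner $x_i=(4^t)$ with $t\geq 2$. Two applications of Lemma~\ref{lem:separation} express the relevant $\Pa((2)\oplus\b\oplus(1))$ or $\Pa((2)\oplus\b)$ as an explicit product of binomial coefficients, values from~\eqref{eq:ExactPc}, and the alternating-permutation count $\Pa(2^{s-2},1)$; the counterparts $\Pa((2)\oplus x_i\oplus(1))=\Pa(2^{s+1},1)$ and $\Pa((2)\oplus x_k)=\Pa(2^{s+1})$ admit the same interpretation.

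The main obstacle is establishing these inequalities uniformly in $s$ and $t$. For the $(2^s)$ families the comparison reduces to an inequality of the form $P(s)\cdot\Pa(2^{s-2},1)>\Pa(2^{s+1},1)$ (or its analogue for $x_k$), with $P(s)$ an explicit polynomial of degree $6$ (resp.~$4$); these are checked directly for small $s$ and, for large $s$, follow from the classical asymptotics of alternating permutations, the limiting ratios $\pi^6/576$ and $\pi^4/80$ both being strictly greater than~$1$. The $(4^t)$ case is treated analogously, combining a direct count of $\Pa(2,4^t,1)$ with one further application of Lemma~\ref{lem:separation} to the chosen $\b$.
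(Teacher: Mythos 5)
Your skeleton is the right one (reduce via Proposition~\ref{prop:canform1}, then apply Proposition~\ref{prop:comparison-factors} with the comparison compositions $(4)$, $(3,3)\oplus(2^{s-3})$ and $(3,2,3)\oplus(4^{t-2})$ — these are exactly the substitutions the paper uses), and your two base computations are correct: $\Pa(2,2,2,1)=272<288=\Pa(2,4,1)$ kills an inner $(2,2)$, and $\Pa(2,2,2,2)=8\cdot 272=2176<2688=\Pa(2,3,3)$ kills a final $(2,2,2)$. But the way you propose to make the inequalities uniform in $s$ and $t$ has a real hole. For the $(2^s)$ families your reduction to $P(s)\,\Pa(2^{s-2},1)>\Pa(2^{s+1},1)$ with limiting ratios $\pi^6/576$ and $\pi^4/80$ is sound in principle, since $\Pa(2^m,1)$ is the tangent number $E_{2m+1}$; however "checked for small $s$, asymptotics for large $s$" is not yet a proof — you need an effective two-sided bound (e.g.\ $2(2/\pi)^{n+1}n!\le E_n\le 2(2/\pi)^{n+1}n!\,\lambda(4)$ with $\lambda$ the Dirichlet lambda function, which in fact makes the inequality hold for \emph{all} $s$ in range) to close the gap between the two regimes. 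More seriously, for the inner $(4^t)$, $t\ge 2$, case you offer nothing concrete: the quantity $\Pa(2,4^t,1)$ is \emph{not} an alternating-permutation count, its asymptotics are not "classical", and "treated analogously" conceals what is actually the hardest remaining family. As written, case (ii) is unproved.

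The paper sidesteps all of this with a finite computation: it verifies the vector inequalities $\T(2,2,2,2)<\T(2,3,3)$ and $\T(2,4,4)<\T(2,3,2,3)$ (where $\T\,\c=(\Ini_{\cdot,b}(\c))_b$), and then Proposition~\ref{prop:comparison2}(2) upgrades each of these to $\Pa((2)\oplus(2^s)\oplus(1))<\Pa((2)\oplus(3,3)\oplus(2^{s-3})\oplus(1))$ and $\Pa((2)\oplus(4^t)\oplus(1))<\Pa((2)\oplus(3,2,3)\oplus(4^{t-2})\oplus(1))$ for every $s\ge 3$ and $t\ge 2$ simultaneously, simply by appending the suffix $(2^{s-3})\oplus(1)$, resp.\ $(4^{t-2})\oplus(1)$ (and $(2^{s-3})$ for the terminal factor, with the single leftover case $s=3$ checked directly). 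Feeding these into Proposition~\ref{prop:comparison-factors} finishes the argument with no asymptotics at all. I would strongly recommend replacing your asymptotic step by this use of Proposition~\ref{prop:comparison2}(2); it is both shorter and actually covers the $(4^t)$ family.
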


\begin{proof} By Proposition~\ref{prop:canform1}, we can assume that 
$ \c=x_0\oplus(3)\oplus x_1\oplus\cdots\oplus(3)\oplus x_k$
with $k\geq 1$ and $x_{0}\in\{\ep,(4)\}$, $x_{k}\in\{\ep\}\cup\{(2^s): s\geq1\}$,
 and $x_{i}\in\{\ep\}\cup\{(2^s),(4^t): s,t\geq1\}$ for $i=1,\ldots,k-1$.
 
 Using a computer algebra system, we obtain
\begin{align*}
\T(2,2,2,2)&= [0, 61, 122, 178, 224, 256, 272, 272],\quad
\T(2, 3, 3) = [0, 80, 160, 240, 320, 400, 480, 560]\\
\T(2, 4, 4) &= [0, 889, 1778, 2726, 3792, 5032, 6496, 8224, 10240,12544],\\
\T(2, 3, 2, 3) &= [0, 1792, 3584, 5376, 7168, 8960, 10752, 12544,14336, 16128],
\end{align*}
from which it results that  $\T((2)\oplus(2,2,2))<\T((2)\oplus(3,3))$
and $\T((2)\oplus(4,4))<\T((2)\oplus(3,2,3))$. 
Moreover, it is easily checked that $\Pa((2)\oplus(2,2,2))<\Pa((2)\oplus(3,3))$.
These inequalities combined with Propositions~\ref{prop:comparison2}(2) and~\ref{prop:comparison-factors}
imply that $x_{k}\in\{\ep,(2),(2,2)\}$, and $x_{i}\in\{\ep,(2),(2,2),(4)\}$ for $i=1,\ldots,k-1$.

 To complete our proof, it remains to prove that if $\c=\a\oplus(3)\oplus(2,2)\oplus(3)\oplus\b$,
 then $\c$ is not maximal. Suppose $\b=\ep$, then $r' \c=(4,2)\oplus \u$ for some composition $u\neq \ep$. Then,
 by Lemma~\ref{lem:ForbiddenPatternsIni} and Fact~\ref{fact:rev-compo}, $r'\c$ and $\c$ are not maximal.
  If $\b\neq\ep$, it results
 from~\eqref{prop:separation} and~\eqref{eq:ExactPc} that
\begin{align}
\frac{\Pa(\a\oplus(3)\oplus(2,2)\oplus(3)\oplus\b)}{\Pa(\a\oplus(3)\oplus(4)\oplus(3)\oplus\b)}
 &=\frac{\Pa((2)\oplus(2,2)\oplus (1))}{\Pa((2)\oplus(4)\oplus
 (1))}=\frac{272}{288}<1.
\end{align}
This concludes the proof.
\end{proof}

Before we end this section, we prove Theorem~\ref{thm:Maximalclasses}. 
It is convenient to present a slightly more general result 
than Proposition~\ref{prop:comparison-factors} which is easily derived by an appropriate specialization 
in Proposition~\ref{prop:separation} and use of the relation $\Pa(2,1)=2$.
 \begin{prop}\label{prop:counting-general}
For $k\geq 1$, $\ell_1,\ldots,\ell_k\geq 1$ and $\c^{(k+1)}\neq
\ep$, we have
\begin{align}
&\Pa(\c^{(1)}\oplus(3^{\ell_1})\oplus\c^{(2)}\oplus(3^{\ell_2})\oplus\cdots\oplus\c^{(k)}\oplus(3^{\ell_k})\oplus\c^{(k+1)})\nonumber\\
&\quad=\left(\frac{1}{3}\right)^{\sum_{i=1}^k\ell_i\,-k}\frac{\big(3(\ell_1+\cdots+\ell_k)+\sum_{i=1}^{k+1}|c_i|\big)!}
            {(1+|\c^{(1)}|)!\big(\prod_{i=2}^k(3+|\c^{(i)}|)!\big)\cdots!(|\c^{(k+1)}|+2)!}\label{eq:counting-general}\\
            &\quad\quad\times
            \Pa(\a\oplus(1))\left(\prod_{i=1}^k\Pa((2)\oplus\c^{(i)}\oplus(1))\right)\Pa((2)\oplus\c^{(k+1)}).\nonumber
\end{align}
\end{prop}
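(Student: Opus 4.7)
The plan is to deduce Proposition~\ref{prop:counting-general} directly from Proposition~\ref{prop:separation} by rewriting each run $(3^{\ell_i})$ as $\ell_i$ copies of $(3)$ separated by $\ell_i-1$ empty compositions. With this convention, the given composition takes the form $\mathbf{d}^{(0)}\oplus(3)\oplus\mathbf{d}^{(1)}\oplus\cdots\oplus(3)\oplus\mathbf{d}^{(K)}$ with $K=\ell_1+\cdots+\ell_k$, where the nonempty factors among $\mathbf{d}^{(0)},\ldots,\mathbf{d}^{(K)}$ are precisely $\c^{(1)},\c^{(2)},\ldots,\c^{(k+1)}$ (sitting at positions $0,\ell_1,\ell_1+\ell_2,\ldots,\ell_1+\cdots+\ell_k$), while the remaining $K-k$ factors are empty.

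I would then substitute into~\eqref{eq:counting}. Each empty factor $\mathbf{d}^{(j)}=\ep$ with $1\le j\le K-1$ contributes a term $\Pa((2)\oplus\ep\oplus(1))=\Pa(2,1)=2$ to the product of $\Pa$-values, and a factor $(3+|\ep|)!=3!=6$ to the denominator of the multinomial coefficient. Since there are exactly $K-k$ such indices, their combined contribution simplifies to $(2/6)^{K-k}=(1/3)^{\sum_{i=1}^k\ell_i-k}$, which is precisely the prefactor in~\eqref{eq:counting-general}. What remains of the multinomial coefficient is $(3K+\sum_{i=1}^{k+1}|\c^{(i)}|)!$ divided by $(1+|\c^{(1)}|)!$, $(3+|\c^{(i)}|)!$ for $i=2,\ldots,k$, and $(|\c^{(k+1)}|+2)!$; and the surviving product of $\Pa$-values is $\Pa(\c^{(1)}\oplus(1))\prod_{i=2}^{k}\Pa((2)\oplus\c^{(i)}\oplus(1))\,\Pa((2)\oplus\c^{(k+1)})$. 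Together these match the right-hand side of~\eqref{eq:counting-general} (the symbol ``$\a$'' in the displayed statement should be read as $\c^{(1)}$, and the stated product over $i=1,\ldots,k$ effectively begins at $i=2$ because the $i=1$ contribution is absorbed into the first $\Pa$-factor).

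The only real obstacle is careful bookkeeping: one must count correctly that exactly $K-k$ of the $K+1$ factors $\mathbf{d}^{(j)}$ are empty — namely the $\ell_i-1$ gaps internal to each run $(3^{\ell_i})$, summed over $i$ — and then verify that the $2$'s coming from $\Pa(2,1)$ combine with the $3!=6$'s coming from the multinomial denominator to produce exactly the exponent $\sum_{i=1}^k\ell_i-k$ of $\tfrac{1}{3}$ asserted in~\eqref{eq:counting-general}. Once this accounting is done, no further argument is needed.
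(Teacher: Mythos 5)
Your proof is correct and is exactly the route the paper intends: the paper itself says only that the proposition ``is easily derived by an appropriate specialization in Proposition~\ref{prop:separation} and use of the relation $\Pa(2,1)=2$'', and your expansion of each run $(3^{\ell_i})$ into $\ell_i$ copies of $(3)$ separated by $\ell_i-1$ empty factors, each contributing $\Pa(2,1)/3!=1/3$, is precisely that specialization with the bookkeeping made explicit. Your reading of the misprints in the displayed statement ($\a$ standing for $\c^{(1)}$, and the product effectively starting at $i=2$) is also the correct one, as is confirmed by the specializations carried out in the proof of Corollary~\ref{cor:ExactPc}.
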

 
\begin{cor}\label{cor:ExactPc}
For $\ell\geq2$, $s,m\geq1$, $t\geq 0$, we have
\begin{align}
\Pa(3^{\ell})&=\Pa(4,3^{\ell-1},2)=\tfrac{1}{5}3^{2-\ell}(3\ell)!,\label{eq:P-T3ell}\\
\Pa(3^{\ell},2)&=3^{-\ell}(3\ell+2)!,\label{eq:P-3ell2}\\
\Pa(3^{s},2,3^{m})&=\Pa(4,3^{m-1},2,3^{s-1},2)=\tfrac{2}{25} 3^{-s-m}(3s+3m+2)!,\label{eq:P-3s23t} \\
\Pa(3^s,2,3^t,2)& =\Pa(3^{t+1},2,3^{s-1},2)=\tfrac{2}{5} 3^{-s-t}(3s+3t+4)!,\label{eq:P-3s23t2}\\
\Pa(4,3^{\ell})&=\tfrac{1}{25}3^{2-\ell}(3\ell+4)!.\label{eq:P-43s}
\end{align}
\end{cor}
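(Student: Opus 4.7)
I will prove the five identities by direct computation using Proposition~\ref{prop:counting-general}, combined with the reversal symmetry of Fact~\ref{fact:rev-compo}.

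Each equality between two peak-compositions stated in the corollary (for instance $\Pa(3^{s},2,3^{m})=\Pa(4,3^{m-1},2,3^{s-1},2)$ and $\Pa(3^s,2,3^t,2)=\Pa(3^{t+1},2,3^{s-1},2)$) is an instance of $\Pa(\c)=\Pa(r'\c)$, verified in one line from the definition of $r'$. The point of these rewrites is to put each composition into a form whose 3-factorization $\c^{(1)}\oplus(3^{\ell_1})\oplus\cdots\oplus(3^{\ell_k})\oplus\c^{(k+1)}$ satisfies $\c^{(k+1)}\neq\ep$, which is the hypothesis required to apply Proposition~\ref{prop:counting-general}.

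For each resulting composition the 3-factorization can be read off by inspection. The non-$(3)$ factors $\c^{(i)}$ always lie in the small set $\{\ep, (2), (4), (2, 2)\}$, so the three types of quantities $\Pa(\c^{(1)}\oplus(1))$, $\Pa((2)\oplus\c^{(i)}\oplus(1))$ and $\Pa((2)\oplus\c^{(k+1)})$ that appear in~\eqref{eq:counting-general} all reduce to the base values $\Pa(1)=1$, $\Pa(2,1)=2$, $\Pa(4,1)=24$, $\Pa(2,2)=8$, $\Pa(2,2,1)=16$ and $\Pa(2,2,2)=96$ listed in~\eqref{eq:ExactPc}. Substituting these values into~\eqref{eq:counting-general} and simplifying the resulting product of a multinomial coefficient, a small integer and the $(1/3)^{\sum\ell_i-k}$ prefactor yields each stated closed form.

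The computation involves no new idea. The only obstacle is careful bookkeeping: the numerical prefactors $\tfrac{1}{5}$, $\tfrac{1}{25}$, $\tfrac{2}{5}$, $\tfrac{2}{25}$ and the powers of $3$ must be tracked through the $(1/3)^{\sum\ell_i-k}$ term, the factorials in the denominator of~\eqref{eq:counting-general}, and the substituted base values. Boundary parameter values (for example a 3-run whose length $\ell_i$ degenerates to $0$, or a factor $\c^{(i)}$ that becomes empty) are handled by re-grouping the 3-factorization accordingly; they produce the same closed form and present no additional difficulty.
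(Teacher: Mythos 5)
Your proposal is correct and follows essentially the same route as the paper: specialize Proposition~\ref{prop:counting-general} to each composition (regrouping the $3$-factorization so that the final factor is nonempty), substitute the base values from~\eqref{eq:ExactPc}, and obtain the paired equalities from $\Pa(\c)=\Pa(r'\c)$ via Fact~\ref{fact:rev-compo}. (Only note that the first displayed equality should read $\Pa(4,3^{\ell-2},2)$ rather than $\Pa(4,3^{\ell-1},2)$ for the sizes to agree, consistent with Theorem~\ref{thm:main}.)
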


\begin{proof} Specializing~\eqref{eq:counting-general} at $k=1$,
$\c^{(1)}=\ep$, $\c^{(2)}=(3)$ and $\ell_1=\ell-1$ gives
\begin{align*}
\Pa(3^{\ell})&=\left(\frac{1}{3}\right)^{\ell-2}\frac{(3\ell)!}{1!5!}\Pa(\ep\oplus(1))\Pa((2)\oplus(3)),
\end{align*}
which simplifies to~\eqref{eq:P-T3ell}  by using~\eqref{eq:ExactPc}.
Similarly, specializing~\eqref{eq:counting-general} at $k=2$,
$\c^{(1)}=\ep$, $\c^{(2)}=\c^{(3)}=(2)$, $\ell_1=s$ and $\ell_2=t$
gives
\begin{align*}
\Pa(3^s,2,3^t,2)&=\left(\frac{1}{3}\right)^{s+t-2}\frac{(3s+3t+4)!}{1!5!4!}\Pa(\ep\oplus(1))\Pa((2)\oplus(2)\oplus(1))\Pa((2)\oplus(3)),
\end{align*}
which simplifies to~\eqref{eq:P-3s23t2}  by
using~\eqref{eq:ExactPc}. The proof of the other assertions are left
to the reader.
 \end{proof}


\section{Proof of Theorem~\ref{thm:main}}

 The purpose of this section is to complete the proof of Theorem~\ref{thm:main}.
We shall use the following result which is a direct consequence of Proposition~\ref{prop:separation}.
\begin{prop}\label{prop:invariance}
 Let $\big(\c^{(i)}\big)_{0\leq i\leq k}$ be a sequence of compositions
 such that $\c^{(k)}\neq \ep$. Then,
for any permutation $\sig\in\S_{k-1}$, we have
\begin{align}\label{eq:invariance}
&\Pa(\c^{(0)}\oplus(3)\oplus\c^{(1)}\oplus(3)\oplus\cdots\oplus\c^{(k)})
=\Pa(\c^{(0)}\oplus(3)\oplus\c^{(\sig_1)}\oplus(3)\oplus\cdots\oplus\c^{(\sig_{k-1})}\oplus(3)\oplus\c^{(k)}).
\end{align}
\end{prop}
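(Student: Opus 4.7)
The plan is to invoke Proposition~\ref{prop:separation} on both sides of~\eqref{eq:invariance} and simply observe that the resulting expression is manifestly symmetric in the middle factors $\c^{(1)},\ldots,\c^{(k-1)}$. This makes the proposition a direct corollary of the counting formula~\eqref{eq:counting} rather than requiring any new combinatorial argument or bijection.

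In detail, I would apply~\eqref{eq:counting} to both the left-hand and right-hand sides of~\eqref{eq:invariance}. Since the first factor $\c^{(0)}$ and the last factor $\c^{(k)}$ are unchanged by $\sig$, the scalar factors $\Pa(\c^{(0)}\oplus(1))$ and $\Pa((2)\oplus\c^{(k)})$ agree on both sides. The interior product $\prod_{i=1}^{k-1}\Pa((2)\oplus\c^{(i)}\oplus(1))$ is a product of positive integers and is therefore invariant under any rearrangement of its factors, in particular under the action of $\sig$. Finally, the multinomial coefficient
$$\binom{\sum_{i=0}^{k}|\c^{(i)}|+3k}{|\c^{(0)}|+1,\,3+|\c^{(1)}|,\ldots,3+|\c^{(k-1)}|,\,|\c^{(k)}|+2}$$
has a top entry depending only on $\sum_i |\c^{(i)}|$, and its bottom entries $|\c^{(0)}|+1$ and $|\c^{(k)}|+2$ are fixed while the middle entries $3+|\c^{(i)}|$ are simply permuted among themselves by $\sig$; multinomial coefficients are symmetric in their lower arguments, so this factor is unchanged as well. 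There is no substantive obstacle here: the statement is essentially a reformulation of the symmetry already built into~\eqref{eq:counting}, and the entire argument amounts to reading off the formula twice and matching factors.
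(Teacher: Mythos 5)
Your proposal is correct and is exactly the argument the paper intends: it states the proposition as ``a direct consequence of Proposition~\ref{prop:separation}'', and your observation that the multinomial coefficient and the interior product in~\eqref{eq:counting} are both invariant under permuting the middle factors is precisely why. Nothing further is needed.
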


Combining Corollary~\ref{cor:canform2} with the above result leads to the 
following result.
\begin{cor}\label{cor:main}
Let $\c=(c_1,\ldots,c_k)$  be a maximal composition with~$k\geq 3$.
\begin{enumerate}
\item If $c_1=3$, then $\c$ has no part equal to 4.
\item If $c_1=4$, then $\c$ has only one part equal to 4.
\item If $c_1=c_k=3$, then $\c$ has at most one part equal to $2$.
\item If $c_1=3$ and $c_k=2$, then $\c$ has at most two part equals to $2$.
\item If $c_1=4$, and $\c$ has a part equal to 2 then $\c=(4,3^s,2^t)$ for some $s,t\geq 1$, $t\leq 2$.
\end{enumerate}
\end{cor}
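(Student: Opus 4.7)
The starting point is Corollary~\ref{cor:canform2}, which supplies every maximal $\c$ with $k\geq 3$ a 3-factorization $\c=x_0\oplus(3)\oplus x_1\oplus\cdots\oplus(3)\oplus x_m$ with $m\geq 1$, $x_0\in\{\ep,(4)\}$, $x_m\in\{\ep,(2),(2,2)\}$, and each middle factor $x_i$ ($1\leq i\leq m-1$) in $\{\ep,(2),(4)\}$. The other crucial ingredient is Proposition~\ref{prop:invariance}: since permuting the middle factors $x_1,\ldots,x_{m-1}$ leaves $\Pa(\c)$ invariant, in each claim we may rearrange them to expose a specific forbidden pattern from Lemma~\ref{lem:ForbiddenPatternsIni2}.

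Claims~(2), (3), (4), (5) are all handled uniformly by this rearrangement strategy. For (2), $x_0=(4)$ together with a second 4 in some middle $x_i$ forces $m\geq 2$; placing it at $x_1$ yields $\c=(4,3,4)\oplus\b'$ with $\b'\neq\ep$, ruled out by Lemma~\ref{lem:ForbiddenPatternsIni2}(1). For (3), $x_0=x_m=\ep$ combined with part~(1) confines middle factors to $\{\ep,(2)\}$; two middle 2's placed at $x_1,x_2$ (so $m\geq 3$) give $\c=(3,2,3,2)\oplus\b'$ with $\b'\neq\ep$, again forbidden by Lemma~\ref{lem:ForbiddenPatternsIni2}(1). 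For (4), if $x_m=(2)$ with two middle 2's (forcing $m\geq 3$), place them at $x_{m-2},x_{m-1}$ to get $\c=(3^{m-2})\oplus(2,3,2,3,2)$; if $x_m=(2,2)$ with any middle 2 (forcing $m\geq 2$), place it at $x_{m-1}$ to get $\c=(3^{m-1})\oplus(2,3,2,2)$; both are forbidden by Lemma~\ref{lem:ForbiddenPatternsIni2}(2). For (5), $x_0=(4)$ and part~(2) confine middle factors to $\{\ep,(2)\}$; a middle 2 placed at $x_1$ produces $\c=(4,3,2)\oplus\b'$ with $\b'\neq\ep$, forbidden by Lemma~\ref{lem:ForbiddenPatternsIni2}(1); hence every middle factor is empty and $\c=(4,3^m)\oplus x_m$ with $x_m\in\{(2),(2,2)\}$, yielding $\c=(4,3^s,2^t)$ with $s=m\geq 1$ and $t\in\{1,2\}$.

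The main obstacle is claim~(1), which cannot be reached by a forbidden-pattern argument: the relevant modification moves a 4 across the distinguished leftmost factor $x_0$, which lies outside the scope of both Proposition~\ref{prop:invariance} and Proposition~\ref{prop:comparison-factors}. I would settle it by a direct ratio calculation. With $c_1=3$ (so $x_0=\ep$) and a middle $x_i=(4)$ placed at $x_1$, compare $\c$ with the composition $\c'$ obtained by setting $x_0'=(4)$ and $x_1'=\ep$ and leaving every further factor unchanged: both have the same size and the same number of 3's, so Proposition~\ref{prop:separation} applies identically to both, and only the first two denominator-terms of the multinomial and the first two product-of-$\Pa$-factors differ. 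Using the values $\Pa(1)=1$, $\Pa(2,4,1)=288$, $\Pa(4,1)=24$, $\Pa(2,1)=2$ from~\eqref{eq:ExactPc}, one finds
\[\frac{\Pa(\c)}{\Pa(\c')}=\frac{5!\,3!}{1!\,7!}\cdot\frac{\Pa(1)\,\Pa(2,4,1)}{\Pa(4,1)\,\Pa(2,1)}=\frac{1}{7}\cdot\frac{288}{48}=\frac{6}{7}<1,\]
so $\c$ is not maximal, completing~(1).
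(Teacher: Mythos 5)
Your proof is correct, and for claims (2)--(5) it is essentially identical to the paper's: the same reduction via Corollary~\ref{cor:canform2}, the same use of Proposition~\ref{prop:invariance} to push the offending factors into position, and the same forbidden patterns from Lemmas~\ref{lem:ForbiddenPatternsIni} and~\ref{lem:ForbiddenPatternsIni2}. The genuine divergence is claim~(1), where your premise that no forbidden-pattern argument is available is mistaken: the paper rearranges to $c_2=4$ and then applies the reversal operator $r'$ of Fact~\ref{fact:rev-compo}, so that $r'\c$ contains the pattern $(4,2)$ adjacent to its last part, which Lemma~\ref{lem:ForbiddenPatternsIni}(3) already rules out (the obstruction you identify --- that the $4$ would have to cross the distinguished factor $x_0$ --- is real for the invariance/comparison machinery, but reversal sidesteps it entirely). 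Your substitute, comparing $(3,4,\ldots)$ with $(4,3,3,\ldots)$ via Proposition~\ref{prop:separation} and the values in~\eqref{eq:ExactPc} to get the ratio $\tfrac{5!\,3!}{7!}\cdot\tfrac{288}{48}=\tfrac{6}{7}<1$, is arithmetically correct and self-contained (one should just note that when $x_m=\ep$ the trailing $(3)$ must be absorbed into the last factor so that $\c^{(k)}\neq\ep$ as Proposition~\ref{prop:separation} requires, and that this bookkeeping is the same on both sides of the comparison). The paper's route is shorter and reuses an existing lemma; yours has the mild advantage of producing an explicit quantitative deficit and of avoiding the reversal map altogether, at the cost of an extra computation.
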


\begin{proof} By Corollary~\ref{cor:canform2}, we can suppose that
 $\c=x_0\oplus(3)\oplus x_1\oplus\cdots\oplus(3)\oplus x_s$
for some $s\geq 1$ and some sequence of compositions $(x_i)_i$ such
that $x_{0}\in\{\ep,(4)\}$, $x_{s}\in\{\ep,(2),(2,2)\}$ and $x_{i}\in\{\ep,(2),(4)\}$ for $i=1,\ldots,s-1$.

 (1) Suppose $c_1=3$  and $c_i=4$ for some $i\geq2$ (i.e., $x_0=\ep$ and $x_j=(4)$ for some $j$ with $1\leq j< s$).
 By~\eqref{eq:invariance}, we can assume that $x_1=(4)$ (i.e., $c_2=4$). In this case, $r'\c=(4,2)\oplus \u$
 for $\u\neq \ep$ and thus, $\c$ is not maximal by Lemma~\ref{lem:ForbiddenPatternsIni}. This contradicts our assumption.
 Thus $c_i\neq 4$ for $2\leq i\leq k$.

 (2) Suppose $c_1=4$ and $c_i=4$ for some $i\geq2$ (i.e., $x_0=x_j=(4)$ for some $j$ with $1\leq j< s$).
  By~\eqref{eq:invariance}, we can assume that $x_1=(4)$ (i.e., $c_3=4$). In this case, $\c=(4,3,4)\oplus\b$ for some $\b\neq\ep$,
  and thus, $\c$ is not maximal by Lemma~\ref{lem:ForbiddenPatternsIni2}.

  (3) Suppose $c_1=c_k=3$ and $\c$ has at least two parts equal to 2.
  By~\eqref{eq:invariance}, we can assume that $x_1=x_2=(2)$. In this case, $\c=(3,2,3,2)\oplus\b$ for some $\b\neq\ep$,
  and thus, $\c$ is not maximal by Lemma~\ref{lem:ForbiddenPatternsIni2}.

 (4) Suppose $c_1=3$, $c_k=2$ and $\c$ has at least three parts equal to 2.
 There are two cases: $x_s=(2,2)$ and $x_s=(2)$.
 If $x_s=(2,2)$ (resp., $x_s=(2)$), by~\eqref{eq:invariance}, we can assume that $x_{s-1}=(2)$ (resp., $x_{s-2}=x_{s-1}=(2)$).
 In this case, $\c=\a\oplus(2,3,2,2)$ (resp., $\c=\a\oplus(2,3,2,3,2)$) for some $\a\neq\ep$.
  In both cases, $\c$ is not maximal by Lemma~\ref{lem:ForbiddenPatternsIni2}.

   (5) Suppose $c_1=4$ and $x_j=2$ for some $j$ with $1\leq j< s$. By~\eqref{eq:invariance},
 we can assume that $x_{1}=(2)$.   In this case, $\c=(4,3,2)\oplus\b$ for some $\b\neq\ep$,
  and thus, $\c$ is not maximal by Lemma~\ref{lem:ForbiddenPatternsIni2}.
 \end{proof}

Combining Corollaries~\ref{cor:canform2} and~\ref{cor:main} shows that 
\begin{align}\label{eq:reformulation-main}
&\textit{if $\c$ is a maximal composition with at least three parts, then}\\
&\quad\c\in\{(3^{\ell}), (4,3^{\ell}), (4,3^{\ell-2},2), (4,3^{\ell-2},2,2),
(3^{\ell},2), (3^{\ell},2,2), (3^{s},2,3^{t},2)\;:\;\ell\geq2,\; s\geq1,\;t\geq0\},\nonumber
\end{align}
which is very close to Theorem~\ref{thm:main}. 
On the other hand, using Fact~\ref{fact:rev-compo} and Corollary~\ref{cor:ExactPc}, we see that
for $s\geq1$, $t\geq0$, $k\geq1$ and $0\leq j\leq k-1$, $\ell\geq 2$, we have
\begin{align}\label{eq:comparisonmaxcomp}
&\Pa(4,3^{\ell-2},2)=\Pa(3^{\ell}),\quad \Pa(4,3^{s})<\Pa(3^{s},2,2),\quad  \Pa(3^{s},2,3^{t})<\Pa(3^{s+t},2),\nonumber\\
&\Pa(4,3^{\ell-2},2,2)<\Pa(3^{\ell},2),\quad \Pa(3^{k-j},2,3^{j},2)=\Pa(3^{k},2,2).
\end{align}

Combining~\eqref{eq:reformulation-main} with~\eqref{eq:comparisonmaxcomp} immediately implies that
Theorem~\ref{thm:main} is true for compositions with at least three parts.
The validity of Theorem~\ref{thm:main} for compositions with two parts 
(it suffices, by Proposition~\ref{prop:part6}, to consider compositions with parts less than 5)
can be treated by computer or by hand, and so, it is left to the reader.


\medskip
{\bf Acknowledgements.} The author would like to thank Christian Krattenthaler for fruitful discussions 
and its encouragement during the preparation of this paper.



\begin{thebibliography}{10}
\bibitem{Billey} S. Billey, K. Burdzy and B. Sagan, Permutations with given peak set, arXiv:1209.0693 (2012).
\bibitem{Bruijn}
N.G. de Bruijn, Permutations with given ups and downs, Nieuw Arch.
Wisk 18 (1970), 61--65.
\bibitem{Carlitz}
L. Carlitz, Permutations with prescribed pattern, Math. Nachr. 58
(1973), 31--53.
\bibitem{Ehrenborg}
R. Ehrenborg and S. Mahajan, Maximizing the Descent Statistic,
Annals of Combinatorics 2 (1998), 111--129.
\bibitem{Ehr-Stein}
R. Ehrenborg and E. Steingr\'imsson, 
The excedance set of a permutation, Adv. in Appl. Math. 24 (2000), no. 3, 284--299.
\bibitem{Foulkes}
H.O. Foulkes, Enumeration of permutations with prescribed up-down
and inversion sequences, Discrete Math. 15 (1976), 235--252.
\bibitem{Karlin} W. O. Kermack and A. G. McKendrick, Tests for randomness in a series of numerical
observations. Proc. Roy. Soc. Edinburgh, 57 (13937), 228--240, .
\bibitem{Mac} P.A. MacMahon, Combinatory Analysis, Vol. I, Chelsea Publishing Company, New York,
1960.
\bibitem{Niven} I. Niven, A combinatorial problem on finite sequences, Nieuw Arch. Wisk. 16 (1968), 116--123.
\bibitem{Viennot} X.G. Viennot, Permutations ayant une forme donn\'ee, Discrete Math. 26 (1979) 279-284.
\end{thebibliography}
\end{document}